\newcommand{\etale}{$\acute{\textrm{e}}$tale }
\newcommand{\ETALE}{$\acute{\textrm{E}}$TALE }
\newcommand{\rr}{\mathbb{R}^2 }
\newcommand{\ep}{\varepsilon}
\newcommand{\topology}{\mathcal{T}}
\newcommand{\toppunc}{\mathcal{T}_R}
\newcommand{\AT}{\mathcal{A}\mathbb{T}}
\newcommand{\s}{\mathcal{S}^1}
\theoremstyle{plain}
\newtheorem{theorem}{Theorem}[section]
\newtheorem{lemma}[theorem]{Lemma}
\newtheorem{proposition}[theorem]{Proposition}
\newtheorem{w-vn}[theorem]{Weyl - von Neumann Theorem}
\theoremstyle{definition}
\newtheorem{definition}[theorem]{Definition}
\theoremstyle{remark}
\newtheorem*{acknowledgements}{Acknowledgements}
\begin{document}

\date{December 4, 2007.}
\title[$C^\ast$-Algebras of Tilings with Infinite Rotational Symmetry]{$C^\ast$-Algebras of Tilings with Infinite Rotational Symmetry}
\author[Michael F. Whittaker]{Michael F. Whittaker}
\address{MICHAEL F. WHITTAKER, Mathematics and Statistics, University of Victoria, Canada}
\email{mfwhittaker@gmail.com}
\curraddr{School of Mathematics, University of Wollongong, Australia}
\subjclass[2000]{46L55, 47L40}
\keywords{$C^*$-algebras, dynamical systems, noncommutative geometry, operator
algebras, substitution tilings, tilings.}

\begin{abstract} 
A tiling with infinite rotational symmetry, such as the Conway-Radin Pinwheel Tiling, gives rise to a topological dynamical system to which an \etale equivalence relation is associated. A groupoid $C^\ast$-algebra for a tiling is produced and a separating dense set is exhibited in the $C^\ast$-algebra which encodes the structure of the topological dynamical system. In the case of a substitution tiling, natural subsets of this separating dense set are used to define an $\AT$-subalgebra of the $C^\ast$-algebra. Finally our results are applied to the Pinwheel Tiling.
\end{abstract}


\maketitle

\section*{INTRODUCTION}

A tiling of the plane refers to the covering of the $2$-dimensional real vector space with polygons such that the polygons only intersect on their borders. A tiling of the plane gives rise to a compact metric space endowed with a continuous action of $\rr$ which is a topological dynamical system. The connections between topological dynamical systems and operator algebras are well established \cite{GPS}. On one hand, dynamical systems are used to produce interesting $C^\ast$-algebras and on the other hand, invariants from operator algebras can be used to classify dynamical systems. 

Associating a $C^\ast$-algebra with a tiling began with a paper of Bellissard \cite{Bel}. In 1995, Johannes Kellendonk \cite{Kel1} exhibited a groupoid structure on the dynamical system associated with a tiling and defined a $C^\ast$-algebra using Renault's groupoid $C^\ast$-algebras. The beauty of Kellendonk's construction was manifested in his reduction of the complexity, while preserving the structure, of the dynamics on the system which provided for the construction of a geometric separating subset of the $C^\ast$-algebra as well as the formulation of natural $C^\ast$-subalgebras. Jared Anderson and Ian Putnam \cite{AP} later used topological methods to compute invariants of the dynamical system of a tiling, to which they associated a different $C^\ast$-algebra, and described these invariants in the framework of $K$-theory for $C^\ast$-algebras. Finally, we note the article \cite{PK} which gives a somewhat expository treatment of the articles \cite{AP} and \cite{Kel1}, and serves as an excellent starting point for the definition of $C^\ast$-algebras from a tiling. In \cite{PK} the $C^\ast$-algebras produced in \cite{AP} and \cite{Kel1} were shown to be strongly Morita equivalent.

In the present article, Kellendonk's construction of a $C^\ast$-algebra from a tiling \cite{Kel1,PK} is extended to include tilings with infinite rotational symmetry. By now there are many examples of tilings with infinite rotational symmetry \cite{Fre}, but the most studied being the Conway-Radin Pinwheel Tiling \cite{Rad}. The Pinwheel Tiling is built from a substitution on a finite base set of tiles called proto-tiles. Two iterations of the substitution on a proto-tile produces 25 tiles including a copy of the original proto-tile in the same orientation and a copy of the proto-tile which has been rotated at an irrational angle with respect to the original proto-tile. This fact implies that in a Pinwheel Tiling there are tiles which appear in an infinite number of distinct orientations. The reader who is unfamiliar with tiling theory may want to read the first three paragraphs of section \ref{Pinwheel Tiling} in order to have an example at hand for the constructions in the first four sections.

We begin by presenting many of the standard definitions relating to tilings with sight modifications to admit tilings with infinite rotational symmetry. Of course, these modifications appear elsewhere in the literature in various guises \cite{BG,ORS,RS,Rad,Sad}, but our modifications anticipate the construction in the sequel. The dynamical system associated with a tiling is produced and the topological structure is discussed.

The construction of a $C^\ast$-algebra begins by producing a compact metric space based on a tiling which is a closed transversal, in the sense of \cite{MRW}, to the action of $\rr$. Considering translational equivalence leads to an \etale equivalence relation, viewed as a groupoid, and allows us to apply Renault's groupoid $C^\ast$-algebras to produce a $C^\ast$-algebra. A separating dense set is contained in the $C^\ast$-algebra and elements of this set are related by a commutation relation.

The $C^\ast$-algebra of a tiling with infinite rotational symmetry admits a particularly nice subalgebra when the tiling is produced from a substitution. We briefly discuss substitution tilings and exploit the substitution to define an $\AT$ $C^\ast$-subalgebra of our $C^\ast$-algebra. An $\AT$-algebra is the inductive limit of matrix algebras with entries that are continuous functions on the circle. Such algebras have been shown to be classifiable whenever they are simple \cite{Ror2}.

In the final section, we apply our results to the Pinwheel Tiling and explicitly define the maps appearing in the construction of the $\AT$-algebra. We then show that the $\AT$-algebra for the Pinwheel Tiling is simple and comment on the $K$-theory of the inductive limit. We conclude with some open questions.

\section{THE GEOMETRY OF TILINGS}\label{geometry}

Let $\rr$ denote the $2$-dimensional Euclidean vector space and $B(x,r)$ an open ball with radius $r$ centered at $x$ in $\rr$. Given a subset $X$ in $\rr$, the translate of $X$ by a vector $y$ in $\rr$ is given by $X+y=\{x+y |x \in X\}$.

\begin{definition}\label{tiling}
A {\em tiling} $T$ of $\rr$ is a countable collection of closed polygonal subsets $\{t_1,t_2,\cdots\}$ of $\rr$, called tiles, whose union covers $\rr$ and the pairwise intersection of the interiors of any two nonequal tiles is empty.
\end{definition}

A tiling is a collection of subsets of $\rr$, so the translate of a tiling $T$ by a vector $x$ in $\rr$ is defined as $T+x=\{t + x | t \in T\}$. Notice that any translate of a tiling is also a tiling. To create a dynamical system we consider the set of all translates of a tiling $T$ which is denoted $T + \rr = \{T+x | x \in \rr \}$. We aim to complete this set in a suitable metric on tilings.

The construction of a metric on $T+\rr$ requires some preliminary notions. To begin, define a group $\Gamma$ as a closed subgroup of the group of orientation preserving isometries on $\rr$ containing all translations, which we write as a semi-direct product
\[ \rr \subseteq \Gamma \subseteq \rr \rtimes \s \]
where $\s$ is identified with the group $SO(2,\mathbb{R})$. Elements in $\s$ are rotations about the origin of $\rr$ and in the sequel these elements will be denoted by $R_\theta$ where
\[ R_\theta = \left(
\begin{array}{cc}
 \cos\theta & -\sin\theta \\
 \sin\theta &  \cos\theta
\end{array} \right) . \]
For $(x,R_\theta)$ and $(y,R_\phi)$ in $\Gamma$ we have group operations
\[ (x,R_\theta)\cdot (y,R_\theta) = (x + R_\theta (y), R_\theta \cdot R_\phi) \quad \textrm{ and } \quad (x,R_\theta)^{-1} = (R_\theta^{-1}(-x),R_\theta^{-1}) \]
and metric
\[ d((x,R_\theta),(y,R_\phi)) = |x-y| + \left( \sum_{i,j=1}^2 (a_{ij}-b_{ij})^2 \right)^{1/2} \]
where $R_\theta = (a_{ij})$ and $R_\phi = (b_{ij})$ in the matrix form given above. Furthermore, we allow an element $(x,R_\theta)$ in $\Gamma$ to act on a tiling $T$ via $(x,R_\theta)\cdot T = R_\theta(T+x)$.
We shall often abuse notation and use $T+x$ for the tiling $(x,R_0)\cdot T$ and $R_\theta(T)$ for the tiling $(0,R_\theta)\cdot T$.

\begin{definition}\label{patch}
A {\em patch} $P$ in a tiling $T$ is a finite subset of tiles in $T$. The notation $T \cap B(x,R)$ will denote the patch of tiles completely contained in the ball of radius $R$ about the point $x$ in $\rr$.
\end{definition}

\begin{definition}\label{Tiling Metric}
Suppose $\Lambda$ is any collection of tilings and $\Gamma$ is a closed subgroup of the orientation preserving isometries of $\rr$. If $T$ and $T^\prime$ are tilings in $\Lambda$, we define the {\em tiling metric}, $d_\Gamma$, on the $\Lambda$ as follows. For $0<\ep<1$ we say the distance between $T$ and $T^\prime$ is less than $\ep$ if we can find elements $(x,R_\theta)$ and $(x^\prime,R_{\theta^\prime})$ in $\Gamma$ such that $d((x,R_\theta),(0,R_0))<\ep$, $d((x^\prime,R_{\theta^\prime}),(0,R_0))<\ep$, and $R_\theta(T+x) \cap B(0,\ep^{-1}) = R_{\theta^\prime}(T^\prime+x^\prime) \cap B(0,\ep^{-1})$. Now $d_\Gamma(T,T^\prime)$ is the infimum over the set consisting of each such $\ep$ satisfying the above hypothesis. If no such $\ep$ exists we define $d_\Gamma(T,T^\prime)$ to be $1$.
\end{definition}

The distance between two tilings is small when the tilings have the same pattern on a large ball about the origin up to a small orientation preserving isometry in $\Gamma$. Of course, the collection of tilings we are concerned about is $T+\rr$.

\begin{definition}\label{continuous hull}
 Given a tiling $T$, the {\em continuous hull} of $T$, denoted by $\Omega_T$, is the completion of $T+\rr$ in the tiling metric $d_\Gamma$.
\end{definition}

We note that every element in the continuous hull is a tiling of $\rr$ and the metric $d_\Gamma$ extends to $\Omega_T$. Moreover, for every tiling $T$ in $\Omega_T$ and every $x$ in $\rr$, $T+x$ is also an element of the continuous hull. From this it follows that $\rr$ is a continuous group action on $\Omega_T$ and we are interested in the dynamics of the system $(\Omega_T, \rr)$. For further details see \cite{PK}.

The local structure of a single tiling, more accurately the patterns in each tiling, play a large role in the global structure of the continuous hull. We briefly present the notion of finite local complexity of a tiling and its consequences.

\begin{definition}\label{Finite Local Complexity}
Suppose $\Lambda$ is a collection of tilings, we say $\Lambda$ has {\em finite local complexity} with respect to the group $\Gamma$ if, for all $R>0$, the set $\{T \cap B(x,R) | T \in \Lambda \textrm{ and } x \in \rr \}\diagup\Gamma$ is finite.
\end{definition}

\begin{lemma}
The continuous hull of a tiling $\Omega_T$ has finite local complexity with respect to $\Gamma$ if for all $R>0$, the set $\{T^\prime \cap B(0,R) | T^\prime \in \Omega_T \}\diagup \Gamma$ is finite.
\end{lemma}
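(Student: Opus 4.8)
The plan is to exploit the translation-invariance of the continuous hull in order to reduce patches centred at an arbitrary point to patches centred at the origin. The hypothesis only bounds, up to $\Gamma$, the number of patterns $T' \cap B(0,R)$ occurring at the origin, whereas finite local complexity demands control over the patches $T' \cap B(x,R)$ centred at every $x \in \rr$; bridging this gap is the entire content of the statement, since the reverse implication is immediate by taking $x=0$.

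First I would fix $R>0$ and record the key identity: for any tiling $T'$ and any $x \in \rr$, translating the patch $T' \cap B(x,R)$ by $-x$ yields $(T' - x) \cap B(0,R)$, because a tile $t$ lies in $B(x,R)$ exactly when $t-x$ lies in $B(0,R)$. Since $\rr \subseteq \Gamma$, this translation is itself an element of $\Gamma$, so $T' \cap B(x,R)$ and $(T'-x) \cap B(0,R)$ lie in the same $\Gamma$-orbit.

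Next I would invoke the remark following Definition \ref{continuous hull}: if $T' \in \Omega_T$ then $T' - x \in \Omega_T$ as well. Combining this with the identity above shows that every $\Gamma$-class of a patch $T' \cap B(x,R)$ with $T' \in \Omega_T$ and $x \in \rr$ already occurs as the $\Gamma$-class of some origin-centred patch $T'' \cap B(0,R)$ with $T'' \in \Omega_T$. Hence the two quotient sets
\[ \{T' \cap B(x,R) \mid T' \in \Omega_T, \ x \in \rr\}\diagup\Gamma \quad \text{and} \quad \{T'' \cap B(0,R) \mid T'' \in \Omega_T\}\diagup\Gamma \]
coincide. The hypothesis makes the second set finite for every $R$, so the first is finite for every $R$, which is precisely finite local complexity.

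The argument is essentially bookkeeping, so there is no deep obstacle; the only point requiring genuine care is verifying that the translation of a patch intertwines correctly with the intersection, i.e. that the centre moves from $x$ to the origin exactly under translation by $-x$, and that this translation is legitimately absorbed into the $\Gamma$-action. This is the step where one must take care not to conflate the tile-set $T' \cap B(x,R)$ with a naive point-set intersection.
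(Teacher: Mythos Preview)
Your argument is correct: translation by $-x$ carries $T' \cap B(x,R)$ to $(T'-x)\cap B(0,R)$, translations lie in $\Gamma$, and $\Omega_T$ is closed under translation, so every $\Gamma$-class of a patch centred at an arbitrary point already appears among the origin-centred ones. The paper itself states this lemma without proof, so there is nothing to compare against; your write-up supplies exactly the routine verification the author evidently regarded as immediate.
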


\begin{theorem}[\cite{RW}] \label{FLC implies compact}
If $T$ is a tiling satisfying finite local complexity with respect to $\Gamma$, then the metric space $( \Omega_T , d_\Gamma )$ is compact. 
\end{theorem}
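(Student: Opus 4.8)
The plan is to obtain compactness from completeness together with the combinatorial finiteness that finite local complexity provides. Since $\Omega_T$ is, by Definition~\ref{continuous hull}, the completion of $T+\rr$ in $d_\Gamma$, it is a complete metric space; hence it suffices to show that every sequence in $\Omega_T$ admits a Cauchy subsequence, which then converges to a point of $\Omega_T$ by completeness. One could instead verify total boundedness directly, but extracting a Cauchy subsequence keeps all of the work inside $\Omega_T$ and avoids having to exhibit an explicit finite net, whose natural candidates (rotates of chosen tilings) need not lie in $\Omega_T$.

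So I would start with a sequence $(T_n)$ in $\Omega_T$ and an exhaustion of $\rr$ by the balls $B(0,k)$, $k\in\mathbb{N}$. For fixed $k$, applying the preceding Lemma with $R=k+1$ shows that the central patches $\{T_n\cap B(0,k+1)\}$ fall into only finitely many $\Gamma$-equivalence classes, so one class contains infinitely many of them. Passing to that subsequence, each surviving index carries an element $g_n=(y_n,R_{\psi_n})$ of $\Gamma$ sending $T_n\cap B(0,k+1)$ exactly onto a fixed reference patch. Because finite local complexity forces a uniform bound $D$ on tile diameters, for $k\ge D$ the tile of $T_n$ meeting the origin lies in $B(0,D)\subseteq B(0,k+1)$ and is mapped by $g_n$ into $B(0,k+1)$; this bounds $|y_n|$ by roughly $k+1+D$, while $R_{\psi_n}$ ranges in the closed, hence compact, projection $H\le\s$ of $\Gamma$. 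Thus the $g_n$ lie in a compact subset of $\Gamma$, and I would refine once more so that $g_n\to g_\infty\in\Gamma$, using that $\Gamma$ is closed.

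Carrying out this refinement successively for $k=1,2,\dots$ and passing to the diagonal subsequence yields a single subsequence, still written $(T_n)$, with the property that for every $k$ the aligning isometries converge, so for large $n,m$ the isometry $h_{n,m}=g_m^{-1}g_n=(y,R_\psi)$ is as close to the identity as desired and satisfies $h_{n,m}\cdot(T_n\cap B(0,k+1))=T_m\cap B(0,k+1)$. Given $\ep>0$, I would choose $k>\ep^{-1}$ large enough that the margin $k+1-\ep^{-1}$ absorbs the displacement a near-identity isometry produces on $B(0,\ep^{-1})$; then for all large $n,m$, taking the small isometry $h_{n,m}$ on $T_n$ and the identity on $T_m$ realizes the agreement $R_\psi(T_n+y)\cap B(0,\ep^{-1})=T_m\cap B(0,\ep^{-1})$ demanded by Definition~\ref{Tiling Metric}, whence $d_\Gamma(T_n,T_m)<\ep$. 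The diagonal subsequence is therefore Cauchy and converges in $\Omega_T$.

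The main obstacle, and the only place the argument goes genuinely beyond the classical translation-only case, is the control of orientations: finite local complexity alone matches patches only up to the full group $\Gamma$, which here may contain arbitrarily large rotations, whereas $d_\Gamma$ rewards closeness only for \emph{small} isometries. The resolution is to exploit that the rotational part $H$ of $\Gamma$ is compact and that the relevant translations are bounded, so the aligning isometries live in a compact set and sub-converge, with closedness of $\Gamma$ keeping the limiting isometry inside $\Gamma$. The remaining routine point is the bookkeeping of the margin between $k+1$ and $\ep^{-1}$, needed to accommodate the angular displacement of size $\approx r\ep$ at radius $r$ that a near-identity rotation introduces near the boundary of $B(0,\ep^{-1})$.
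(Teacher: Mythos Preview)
The paper does not supply its own proof of this theorem: it is stated with a citation to Radin--Wolff \cite{RW} and no argument is given in the text. So there is no in-paper proof to compare against; your proposal is being measured against the classical argument.

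Your approach is the standard one and is correct in outline: completeness of $\Omega_T$ plus a diagonal extraction of a Cauchy subsequence, where at each scale finite local complexity pigeonholes the central patches into finitely many $\Gamma$-classes, and compactness of the set of aligning isometries (bounded translations times the compact rotation part of $\Gamma$) forces the aligning maps to sub-converge so that successive terms differ by a near-identity element of $\Gamma$. The one place to tighten is the bound on the translational parts $y_n$: as written you fix a reference patch only up to $\Gamma$, so without an anchoring convention the $y_n$ need not be bounded. The clean fix is to choose the representative of each $\Gamma$-class so that a distinguished tile (or vertex) sits at the origin; then the tile of $T_n$ covering $0$ is carried to a tile within $B(0,D)$, giving $|y_n|\le 2D$ uniformly in $k$ rather than the $k$-dependent bound you wrote. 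With that normalization the rest of your argument, including the margin bookkeeping between $k+1$ and $\ep^{-1}$, goes through.
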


The choice of the group $\Gamma$ is contingent on the original tiling $T$ in the sense that we would like to choose $\Gamma$ to be the smallest subgroup of the orientation preserving isometries of $\rr$ such that $\Omega_T$ has finite local complexity. For example, the Penrose Tiling has finite local complexity for $\Gamma = \rr$, see \cite{AP}. However, the predominant example of a tiling with infinite rotational symmetry, the Pinwheel Tiling, has finite local complexity only when $\Gamma$ is the full group of orientation preserving isometries of $\rr$. We shall say more about this in Section \ref{Pinwheel Tiling}. Finite local complexity also allows for the description of a base set of tiles from which a tiling is created, called proto-tiles.

\begin{definition}\label{proto-tiles}
Given $\{ p_1 , p_2 , \cdots , p_n \}$, a finite and non-empty collection of closed polygonal subsets of $\rr$, we say that $\{ p_1 , p_2 , \cdots , p_n \}$ is a set of proto-tiles for a collection of tilings $\Lambda$ if for each tile $t$ in $T$ such that $T$ is in $\Lambda$ we have $t = \gamma(p_i)$ for some $\gamma$ in the group $\Gamma$, $i \in \{ 1 , 2 , \cdots , n \}$.
\end{definition}

\begin{definition}\label{aperiodic}
We say that a tiling $T$ is {\em aperiodic} if $T + x \neq T$ for every non-zero $x$ in $\rr$. Furthermore, we say that the continuous hull $\Omega_T$ is {\em strongly aperiodic} if $\Omega_T$ contains no periodic tilings.
\end{definition}

Our interest lies in tilings where the continuous hull is strongly aperiodic. In the sequel, we will construct a groupoid $C^\ast$-algebra from a subset of the continuous hull. Strongly aperiodic tiling systems allow us to consider translational equivalence classes which are viewed as the leaves of a foliation with a closed transversal that is homeomorphic to the product of a Cantor set and a circle in the infinite rotation case. The following theorem gives conditions for when the continuous hull is strongly aperiodic.

\begin{theorem}[\cite{AP}]\label{strongly aperiodic condition}
If the action of $\rr$ on $\Omega_T$ is minimal and $T$ is aperiodic, then $\Omega_T$ is strongly aperiodic.
\end{theorem}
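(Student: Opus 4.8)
The plan is to argue by contradiction, showing that a periodic tiling in $\Omega_T$ would force the original tiling $T$ to be periodic as well. Suppose, then, that the hypotheses hold but $\Omega_T$ is \emph{not} strongly aperiodic, so there is some $T^\prime \in \Omega_T$ and a nonzero $x_0 \in \rr$ with $T^\prime + x_0 = T^\prime$. Note that aperiodicity and periodicity are defined purely in terms of the translation action, so the rotation group $\Gamma$ plays no direct role here; the only property of the hull we use is that $\rr$ acts continuously on $(\Omega_T, d_\Gamma)$, as recorded after Definition \ref{continuous hull}. This is why the passage to infinite rotational symmetry introduces no new difficulty relative to the translational setting of \cite{AP}.

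First I would consider the fixed set
\[ F = \{ S \in \Omega_T : S + x_0 = S \}. \]
The translation map $S \mapsto S + x_0$ is continuous and $\Omega_T$ is a Hausdorff metric space, so $F$, being the coincidence set of this map with the identity, is closed; equivalently, $F$ is the zero set of the continuous function $S \mapsto d_\Gamma(S + x_0, S)$. Next I would check that $F$ is invariant under the $\rr$-action, and this is the one step that genuinely uses the structure at hand, namely commutativity of $\rr$: for $S \in F$ and $y \in \rr$ we have $(S + y) + x_0 = (S + x_0) + y = S + y$, so $S + y \in F$. In particular the entire orbit $T^\prime + \rr$ is contained in $F$.

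Finally I would invoke minimality. Since the action of $\rr$ on $\Omega_T$ is minimal, the orbit $T^\prime + \rr$ is dense in $\Omega_T$. As $F$ is closed and contains this dense orbit, $F = \Omega_T$. But $T$ itself lies in $T + \rr \subseteq \Omega_T = F$, so $T + x_0 = T$ with $x_0 \neq 0$, contradicting the aperiodicity of $T$. This contradiction establishes that $\Omega_T$ contains no periodic tilings, i.e. that $\Omega_T$ is strongly aperiodic.

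The argument is essentially soft topological dynamics, so I do not expect a single hard computational obstacle. The point requiring the most care is the invariance of $F$ under translation, which rests on commutativity of the translation group, together with the observation that periodicity is insensitive to the rotational part of $\Gamma$. One should also confirm that $F$ is genuinely closed in the $d_\Gamma$ metric, which reduces to the $d_\Gamma$-continuity of translation by $x_0$; this in turn is exactly the continuity of the $\rr$-action on $\Omega_T$ asserted in the excerpt.
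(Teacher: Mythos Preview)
Your argument is correct and is the standard proof of this fact. Note, however, that the paper does not supply its own proof of this theorem: it is stated with a citation to \cite{AP} and no argument is given in the text, so there is nothing to compare against beyond observing that your proof is exactly the classical one (the set of $x_0$-periodic points is a closed $\rr$-invariant subset, hence all of $\Omega_T$ by minimality).
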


\section{THE \ETALE EQUIVALENCE RELATION OF A TILING}\label{etale equiv section}

In this section we generalize the construction of Kellendonk \cite{Kel1,PK} to include tilings with infinite rotational symmetry such as the Pinwheel tiling. The basic idea is to use Kellendonk's punctured hull to define an \etale equivalence relation which encodes the dynamics of the system $(\Omega_T,\rr)$. Once we have shown that we have an \etale equivalence relation, we construct, using Renault's groupoid $C^\ast$-algebras \cite{Ren}, a $C^\ast$-algebra on the dynamical system associated with a tiling.

Suppose $X$ is a set and $R$ is an equivalence relation on $X \times X$. The equivalence relation $R$ is endowed with a natural groupoid structure as follows. Multiplication is partially defined on $R$ with $(x,y)\cdot(w,z)=(x,z)$ if and only if $y=w$ and inverses are defined by $(x,y)^{-1} = (y,x)$. The range and source map $r:R \rightarrow X$ and $s:R \rightarrow X$ are defined via $r(x,y)=x$ and $s(x,y)=y$. The unit space is the diagonal and is denoted $\Delta = \{(x,x) | x \in X\}$. Furthermore, a triple $(X,R,\topology)$ is said to be an {\em \etale equivalence relation} when $X$ is a compact Hausdorff space and $R$ has been endowed with an \etale topology $\topology$. For our purposes $\topology$ is an {\em \etale topology} on $R$ if the following conditions are satisfied:
\begin{enumerate}
	\item $(R,\mathcal{T})$ is $\sigma$-compact,
	\item $\Delta = \{(x,x) | x \in X \}$ is open in $(R,\mathcal{T})$,
	\item every point $(x,y)$ in $R$ has an open neighborhood $U$ in $(R,\mathcal{T})$ such that $r: U \rightarrow r(U)$ and $s: U \rightarrow s(U)$ are homeomorphisms,
	\item if $U$ and $V$ are open sets in $(R,\mathcal{T})$, then $UV$ is open in $(R,\mathcal{T})$,
	\item if $U$ is an open set in $(R,\mathcal{T})$, then $U^{-1}$ is open in $(R,\mathcal{T})$. 
\end{enumerate}

The running assumptions for the remainder of this paper are:
\begin{itemize}
\item the group $\Gamma$ is equal to the full group of orientation preserving isometries on $\rr$; that is, for any $\gamma$ in $\Gamma$ and any proto-tile $p$ there is a tiling in $\Omega_T$ containing the tile $\gamma(p)$,
\item the continuous hull $\Omega_T$ satisfies finite local complexity with respect to $\Gamma$,
\item the continuous hull $\Omega_T$ is strongly aperiodic.
\end{itemize}
We note that restriction from the case of $\Gamma=\rr \rtimes \s$ to any closed subgroup $\Gamma$ is done in the obvious way.

\begin{definition}[\cite{Kel1}]\label{punctured tiling euclidean}
For a tiling $T$ define a point in the interior of each proto-tile which maximally breaks the symmetry of the proto-tile. Such a point is called a {\em puncture}. For each proto-tile $p_i$ in $\{p_1,p_2, \cdots, p_n\}$, denote the puncture by $x(p_i)$. Since each tile in $T$ is the image of a proto-tile under $\Gamma$, we may extend these punctures to each tile $t \in T$ via $\Gamma$; i.e. if $\Gamma$ relates two tiles $( t_1 = (x,R_\theta)t_2 \, \textrm{for some} \, (x,R_\theta) \in \Gamma )$ then $\Gamma$ relates their punctures in the same way $( x(t_1) = (x,R_\theta)(x(t_2)) )$.
\end{definition}

We remark that a puncture in a proto-tile is said to maximally break the symmetry of the proto-tile if the pointed polygon $(p,x(p))$ has trivial symmetry group.

\begin{definition}[\cite{Kel1}] \label{discrete hull euclidean}
Given the continuous hull of a tiling $\Omega_T$, the {\em discrete hull} of $\Omega_T$, denoted by $\Omega_{punc}$, is defined to be all punctured tilings $T^\prime$ in $\Omega_T$ such that the origin of $\rr$ is a puncture of some tile $t$ in $T^\prime$. Introducing the notation that $T^\prime(0)$ is the tile in $T^\prime$ that contains the origin, $0$ in $\rr$, we have 
\[ \Omega_{punc} = \{ T^\prime \in \Omega_T | x(t)= 0 \textrm{ for } t = T^\prime(0)\} \]
Since punctures are in the interior of tiles, the tile $t$ such that $x(t)=0$ is unique.
\end{definition}

Some notation will be necessary for the remainder of this note. From this point forward we shall consider the proto-tiles $\{p_1, \cdots, p_n\}$ to have their puncture on the origin and to be fixed in a standard orientation. We can also rotate the proto-tiles around the origin and keep track of the rotation. For a tiling $T$ with tile $T(0)$ having puncture on the origin we define $\angle T(0) = \theta$ such that $R_\theta(p_i) = T(0)$ for some $i \in \{1, \cdots,n\}$. A natural subset of $\Omega_{punc}$ is the collection of tilings with a tile sitting on the origin in the same orientation as the proto-tiles; i.e define
\[ \Omega_{punc}^0 = \{T \in \Omega_{punc} | T(0) \in \{p_1, \cdots, p_n\} \}. \]
Now each tiling $T$ in $\Omega_{punc}$ may be written as $R_\theta(T_0)$ for some tiling $T_0$ in $\Omega_{punc}^0$. Furthermore, we define a homeomorphism $\psi:\Omega_{punc} \rightarrow \Omega_{punc}^0 \times \s$ via $\psi(T)=(T_0,R_\theta)$, where $\s$ is the full group of rotations on $\rr$. After stating some properties of the relationship between $\Omega_T$, $\Omega_{punc}$, and $\Omega_{punc}^0$ we shall prove that $\psi$ is a homeomorphism.

\begin{lemma} \label{euclidean on omega punc}
Given the continuous hull of a tiling $\Omega_T$, we have:
\begin{enumerate}
	\item If $T^\prime$ is in $\Omega_T$, then $R_\theta(T^\prime+x)$ is in $\Omega_{punc}^0$ for some $(x,R_\theta)$ in $\Gamma$.
	\item $\Omega_{punc}$ is closed in $\Omega_T$.
	\item There is an $\ep > 0$ such that for any tiling $T$ in $\Omega_{punc}$, $T+x$ is not in $\Omega_{punc}$ for any $0 < |x| < \ep$.
	\item For every nontrivial rotation $R_\theta$ in $\s$ and every $T_0$ in $\Omega_{punc}^0$, $R_\theta(T_0)$ is not in $\Omega_{punc}^0$.
	\item There is an $\ep > 0$ such that for any tiling $T_0$ in $\Omega_{punc}^0$, $R_\theta(T_0+x)$ is not in $\Omega_{punc}^0$ for any $(x,R_\theta)$ in $\Gamma$ such that $0 < d((x,R_\theta) , (0,R_0)) < \ep$.
\end{enumerate}
\end{lemma}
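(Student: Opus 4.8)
The plan is to extract from finite local complexity a single uniform geometric constant and then read off all five statements from it. Since $\Omega_T$ has finite local complexity with respect to $\Gamma$ there are, up to $\Gamma$, only finitely many proto-tiles $p_1,\dots,p_n$, and each carries its puncture in its interior. I would set
\[ \delta = \min_{1\le i\le n}\operatorname{dist}\bigl(x(p_i),\partial p_i\bigr) > 0. \]
Because every tile in every $T^\prime\in\Omega_T$ is an isometric image of some $p_i$ and orientation-preserving isometries preserve distances, the puncture of any tile lies at distance at least $\delta$ from that tile's boundary; equivalently $B(x(t),\delta)\subseteq\operatorname{int}(t)$ for every tile $t$. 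As distinct punctures lie in distinct tiles with disjoint interiors, any two punctures of any tiling in $\Omega_T$ are at least $\delta$ apart. I expect this uniform separation to be the workhorse of the proof.

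Given this, part~(3) is immediate: if $T\in\Omega_{punc}$ then $0=x(T(0))$, and $T+x\in\Omega_{punc}$ forces $-x$ to be a puncture of $T$, so by separation $|x|\ge\delta$ unless $x=0$; thus $\epsilon=\delta$ works. For part~(1), I would pick any tile $t\in T^\prime$ and translate by $-x(t)$, placing its puncture at the origin; since the puncture is interior, $0\in\operatorname{int}(t-x(t))$ and hence $T^\prime-x(t)\in\Omega_{punc}$. The tile at the origin then equals $R_\phi(p_i)$ for a rotation $R_\phi$ fixing the origin (both punctures sit at $0$, so the relating element of $\Gamma$ is a pure rotation), and applying $R_\phi^{-1}$ yields $R_\phi^{-1}(T^\prime-x(t))\in\Omega_{punc}^0$, giving $(x,R_\theta)=(-x(t),R_\phi^{-1})$. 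For part~(4), the rotation $R_\theta$ fixes the origin, so $R_\theta(T_0)$ retains its origin-puncture and lies in $\Omega_{punc}$, with tile $R_\theta(p_i)$ at the origin. Because the puncture maximally breaks the symmetry of $p_i$ and the proto-tiles represent distinct $\Gamma$-orbits, $R_\theta(p_i)\in\{p_1,\dots,p_n\}$ only when $\theta=0$, so a nontrivial rotation sends $T_0$ out of $\Omega_{punc}^0$.

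The part I expect to require the most care is (2), since convergence in $d_\Gamma$ only matches tilings on large balls \emph{after} applying small ambient isometries, so I cannot directly assert that the origin-puncture of the limit sits at the origin. I would argue as follows: if $T_m\to T$ with $T_m\in\Omega_{punc}$, then for each $m$ there are $\gamma_m,\gamma_m^\prime\in\Gamma$ converging to the identity with $\gamma_m(T_m)$ and $\gamma_m^\prime(T)$ agreeing on a ball whose radius tends to infinity. Transporting the origin-puncture of $T_m$ through $(\gamma_m^\prime)^{-1}\gamma_m$ produces a puncture $q_m$ of the fixed tiling $T$ with $q_m\to 0$. A convergent sequence of punctures of $T$ must, by the uniform separation $\delta$, be eventually constant, forcing $q_m=0$ for large $m$; hence $0$ is a puncture of $T$ and $T\in\Omega_{punc}$. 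Finally part~(5) combines the previous steps: taking $\epsilon=\delta$, the hypothesis $R_\theta(T_0+x)\in\Omega_{punc}^0$ first forces $-x$ to be a puncture of $T_0$, whence $x=0$ by the separation bound, and then $R_\theta(T_0)\in\Omega_{punc}^0$ forces $R_\theta=R_0$ by part~(4); thus $0<d((x,R_\theta),(0,R_0))<\delta$ cannot occur.
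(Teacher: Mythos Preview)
Your proposal is correct and follows essentially the same route as the paper: both hinge on the single constant $\delta=\min_i\operatorname{dist}(x(p_i),\partial p_i)$, and the paper's terse ``(v) follows from (iii)'' is exactly your combination of (iii) and (iv). Your treatment of (ii) is in fact more careful than the paper's, which simply asserts $\lim x(t_n)=x(t')$ without unpacking how the small ambient isometries in $d_\Gamma$ are absorbed; your transport-and-separation argument fills that gap.
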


\begin{proof}
Statement (i) is obvious. For (ii) take $T^\prime \in \Omega_T$ a point of closure of $\Omega_{punc}$. There is a sequence $\{T_n\}_{n=1}^\infty \in \Omega_{punc}$ such that $\lim_{n \rightarrow \infty} T_n = T^\prime$. Let $T_n(0) = t_n$ and $T^\prime(0) = t^\prime$ where $x(t_n) = 0, \, \forall \, n \in \mathbb{N}$. Now $\lim_{n \rightarrow \infty} x(t_n) = x(t^\prime)$ which implies $0 = x(t^\prime)$. Whence, $T^\prime \in \Omega_{punc}$. For (iii), if $\mathcal{P}$ is the set of proto-tiles for $\Omega_{punc}$, let $\ep>0$ be smaller than the minimum distance from every proto-tile's puncture to the proto-tile's boundary. Statement (iv) is obvious and (v) follows from (iii).
\end{proof}

\begin{lemma}\label{psi homeo}
The map $\psi: \Omega_{punc} \rightarrow \Omega_{punc}^0 \times \s$ via $\psi(T) = (T_0,R_\theta)$ is a homeomorphism.
\end{lemma}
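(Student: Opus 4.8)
\begin{sketch}
The plan is to deduce the result from the principle that a continuous bijection from a compact space onto a Hausdorff space is automatically a homeomorphism; this frees us from having to verify continuity in both directions. First I would assemble the compactness facts. By Theorem \ref{FLC implies compact} the continuous hull $\Omega_T$ is compact, and since $\Omega_{punc}$ is closed in $\Omega_T$ by Lemma \ref{euclidean on omega punc}(ii), it is compact as well. Moreover $\Omega_{punc}^0$ is closed in $\Omega_{punc}$: if $T_k \to T$ with each $T_k(0)\in\{p_1,\dots,p_n\}$, then by finite local complexity the origin-tile $T_k(0)$ equals a single proto-tile $p_i$ along a subsequence, and matching the origin-tiles across the convergence (each comparison being via an isometry tending to the identity) forces $T(0)=p_i$, so $T\in\Omega_{punc}^0$. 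Hence $\Omega_{punc}^0$ is compact, and as $\s$ is compact so is the product $\Omega_{punc}^0\times\s$. Both the domain and codomain of $\psi$ are therefore compact Hausdorff spaces.

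Next I would settle bijectivity. Existence of a decomposition $T=R_\theta(T_0)$ with $T_0\in\Omega_{punc}^0$ is recorded above, so $\psi$ is defined; for well-definedness and injectivity it remains to see this decomposition is unique. If $R_\theta(T_0)=R_{\theta'}(T_0')$ with $T_0,T_0'\in\Omega_{punc}^0$, then $T_0=R_{\theta'-\theta}(T_0')$, and were $\theta\neq\theta'$ in $\s$ this would contradict Lemma \ref{euclidean on omega punc}(iv), which forbids a nontrivial rotation from carrying a tiling of $\Omega_{punc}^0$ back into $\Omega_{punc}^0$. Thus $\theta=\theta'$ and $T_0=T_0'$, and the assignment $(T_0,R_\theta)\mapsto R_\theta(T_0)$ is a genuine two-sided inverse for $\psi$.

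Finally I would establish continuity, and here it is cleanest to work with $\psi^{-1}$. The map $(T_0,R_\theta)\mapsto R_\theta(T_0)=(0,R_\theta)\cdot T_0$ is simply the restriction to $\Omega_{punc}^0\times\s$ of the action of the rotation subgroup $\s$ of $\Gamma$ on $\Omega_T$, its image landing in $\Omega_{punc}$ because a rotation fixes the origin and hence preserves the puncture condition. I expect the only genuinely analytic point of the proof to be the continuity of this action: unlike the translation action, whose continuity is immediate from Definition \ref{Tiling Metric}, one must unpack the tiling metric to see that a small rotation moves a tiling only a small amount in $d_\Gamma$ (comparing $T_0$ and $R_\theta(T_0)$ through the near-identity isometry $R_{-\theta}$), and that this is uniform enough to give joint continuity as $T_0$ varies. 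This is precisely the feature that forces $\Gamma$ to contain $\s$ in the infinite-rotational-symmetry setting. Granting it, $\psi^{-1}$ is a continuous bijection from the compact space $\Omega_{punc}^0\times\s$ to the Hausdorff space $\Omega_{punc}$, hence a homeomorphism, and therefore $\psi$ is a homeomorphism.
\end{sketch}
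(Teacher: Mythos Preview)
Your overall strategy is sound and genuinely different from the paper's. The paper does not invoke the compact-to-Hausdorff principle at all: after handling bijectivity it simply endows $\Omega_{punc}^0\times\s$ with the product metric and observes, using Lemma~\ref{euclidean on omega punc}(v), that $\psi$ is isometric, hence a homeomorphism in one stroke. Your route trades this metric argument for soft topology, at the cost of having to verify compactness of $\Omega_{punc}^0$ and continuity of the rotation action; what you gain is that you never need to compare the two metrics, whereas the paper's approach gives the stronger conclusion that $\psi$ is an isometry.

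There is, however, one point you pass over too quickly: surjectivity. Your justification that the image of $(T_0,R_\theta)\mapsto R_\theta(T_0)$ lands in $\Omega_{punc}$ ``because a rotation fixes the origin and hence preserves the puncture condition'' presupposes that $R_\theta(T_0)$ lies in $\Omega_T$ at all. But $\Omega_T$ is defined as the completion of the \emph{translation} orbit $T+\rr$, and the paper only records that $\rr$ acts on $\Omega_T$; rotational invariance of $\Omega_T$ is not automatic. This is exactly where the running assumption that $\Gamma$ is the full isometry group (so every rotated proto-tile occurs in some tiling of $\Omega_T$) must enter, and the paper spends its surjectivity paragraph on it: given $(T_0,R_\theta)$, finite local complexity together with that assumption produces tilings $T_n\in\Omega_{punc}$ agreeing with $R_\theta(T_0)$ on $B(0,n)$, giving a Cauchy sequence whose limit in the complete space $\Omega_{punc}$ is $R_\theta(T_0)$. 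You should insert this argument (or an equivalent one showing $\Omega_T$ is $\s$-invariant) before invoking the inverse map; once that is in place your compactness argument goes through.
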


\begin{proof}
Suppose $\psi(T) = (T_0,R_\theta)=(T_0^\prime,R_{\theta^{\prime}}) = \psi(T^\prime)$. Then $T_0 = T_0^\prime$ and since punctures maximally break the symmetry of tiles, $\s$ acts freely on $\Omega_{punc}^0$ so that $R_\theta=R_{\theta^{\prime}}$ and hence $\psi$ is injective. To show that $\psi$ is surjective, let $(T_0,R_\theta)$ be an element of $\Omega_{punc}^0 \times \s$. Since $\Omega_{punc}$ has finite local complexity and $\Gamma$ is the full group of orientation preserving isometries there is a tiling $T_1$ in $\Omega_{punc}$ such that $T_1 \cap B(0,1) = R_\theta(T_0) \cap B(0,1)$ and we have $d_\Gamma(T_1,R_\theta(T_0)) < 1$. Similarly, for every $n \in \mathbb{N}$, there exists a tiling $T_n \in \Omega_{punc}$ such that $d_\Gamma(T_n,R_\theta(T_0)) < 1/n$. This gives us a Cauchy sequence $\{T_n\}$ in $\Omega_{punc}$ converging to $R_\theta(T_0)$. So $\psi$ is surjective because $\Omega_{punc}$ is complete. Endowing $\Omega_{punc}^0 \times \s$ with the product topology of the tiling metric restricted to the translational case and the metric on $\s$ it follows from (v) in Lemma \ref{euclidean on omega punc} that $\psi$ is isometric. Thus, $\psi$ is a homeomorphism.
\end{proof}

The topology of $\Omega_{punc}$, given by the tiling metric, can now be described using the product structure coming from the homeomorphism $\psi$. Indeed, suppose $P$ is a finite patch in some $T$ in $\Omega_{punc}$ and $t$ is a tile in $P$ then we define the set $U(P,t)$ to be all tilings $T^\prime$ in $\Omega_{punc}$ such that the image of the patch $P - x(t)$ under an element $R_\theta$ of $\s$ is a patch in $T^\prime$ with $R_\theta (P-x(t))$ at the origin. In symbols:
\[ U(P,t) = \{ T^\prime \in \Omega_{punc} | R_\theta(P - x(t)) \subset T^\prime \textrm{ for some } R_\theta \in \s\}. \]
Let $\mathcal{U}$ be the collection of all such sets for a given discrete hull $\Omega_{punc}$.

Let us define the collection $\mathcal{U}_0$ to be the restriction of $\mathcal{U}$ to $\Omega_{punc}^0$. Using the homeomorphism $\psi$ it follows that $\mathcal{U}$ can be identified with $\mathcal{U}_0 \times \s$. It is easily verified that $\mathcal{U}_0$ generates the metric topology of $\Omega_{punc}^0$ which is the tiling metric restricted to $\rr$ in $\Gamma$.

\begin{lemma}\label{cantor set times circle}
The punctured hull of a tiling, $\Omega_{punc}$, is the product of a Cantor set and a circle. Moreover, $\Omega_{punc}^0$ is a Cantor set in the relative topology of $\mathcal{U}_0$.
\end{lemma}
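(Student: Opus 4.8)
The plan is to reduce everything to a single topological identification: by Lemma \ref{psi homeo} the map $\psi$ is a homeomorphism $\Omega_{punc} \cong \Omega_{punc}^0 \times \s$, and since $\s$ is a circle, the entire statement follows once I show that $\Omega_{punc}^0$ is a Cantor set. To recognise $\Omega_{punc}^0$ as a Cantor set I would appeal to Brouwer's characterisation: a topological space is homeomorphic to the Cantor set precisely when it is nonempty, compact, metrizable, perfect, and totally disconnected. Thus the proof splits into verifying these properties for $\Omega_{punc}^0$ with the topology generated by $\mathcal{U}_0$, that is, the tiling metric $d_\Gamma$ restricted to $\rr$.

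The first properties are routine. Metrizability is inherited from the metric space $(\Omega_T,d_\Gamma)$. For compactness, Theorem \ref{FLC implies compact} gives that $\Omega_T$ is compact, Lemma \ref{euclidean on omega punc}(ii) gives that $\Omega_{punc}$ is closed in $\Omega_T$, and $\Omega_{punc}^0 = \psi^{-1}(\Omega_{punc}^0 \times \{R_0\})$ is closed in $\Omega_{punc}$ since $\{R_0\}$ is closed in $\s$; a closed subset of a compact space is compact. Nonemptiness is clear. For total disconnectedness I would show that $\Omega_{punc}^0$ has a basis of clopen sets. The cylinder sets $\{T'' \in \Omega_{punc}^0 \mid T'' \cap B(0,R) = P\}$, indexed by the patches $P$ with central tile in standard orientation, are open essentially by the definition of $\mathcal{U}_0$; each complement is a union of the remaining cylinders and so is also open, whence every cylinder is clopen. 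These sets refine as $R \to \infty$ and generate the topology, so $\Omega_{punc}^0$ is totally disconnected, with finite local complexity controlling the combinatorial types that can occur.

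The crux, and the step I expect to be the main obstacle, is showing that $\Omega_{punc}^0$ is perfect. Fix $T' \in \Omega_{punc}^0$ and a basic neighbourhood of it. Using that a puncture maximally breaks the symmetry of its proto-tile---so the rotation $R_\theta$ in the definition of a set in $\mathcal{U}$ is forced to be trivial once the central tile is pinned in standard orientation---this neighbourhood reduces to the set of all $T'' \in \Omega_{punc}^0$ agreeing with $T'$ exactly on a ball $B(0,R)$, with common central patch $P$. I must produce a second tiling there. The idea is to find another occurrence of $P$ in $T'$ by a pure translation: if $P$ recurs in the same orientation at a distant site $y \in \rr$, then $T'' = T' - y$ again lies in $\Omega_{punc}^0$ and has central patch $P$, so $T''$ lies in the neighbourhood, while $T'' = T'$ would force $T' + y = T'$, contradicting aperiodicity. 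Hence $T'' \neq T'$ and $T'$ is not isolated.

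The genuine content is therefore the translational recurrence of $P$. I would obtain it from the structure of the hull as the orbit closure $\Omega_T = \overline{T + \rr}$ together with finite local complexity, and---in the setting of the following section---from the primitivity of the substitution, which manufactures same-orientation copies of every patch; strong aperiodicity is exactly what upgrades ``a recurrence'' to ``a distinct recurrence.'' This is where the standing tiling hypotheses do the real work: without some repetitivity a compact totally disconnected hull could a priori carry isolated points, so it is the interplay of finite local complexity and strong aperiodicity that forces perfection. Once perfection is established, Brouwer's theorem identifies $\Omega_{punc}^0$ with the Cantor set, and transporting along $\psi$ yields $\Omega_{punc} \cong \Omega_{punc}^0 \times \s$, a Cantor set times a circle, completing the proof.
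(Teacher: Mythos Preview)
Your proposal follows essentially the same route as the paper: a clopen basis for $\Omega_{punc}^0$ gives total disconnectedness, strong aperiodicity gives perfection, compactness comes from Theorem \ref{FLC implies compact} and Lemma \ref{euclidean on omega punc}(ii), and Lemma \ref{psi homeo} supplies the product structure. The only differences are in execution---the paper proves the generators $U_0(P,t)$ closed by a direct sequence argument using Lemma \ref{euclidean on omega punc}(v) and finite local complexity rather than your complement-is-a-union-of-cylinders argument, and it is terser on perfection, whereas you rightly isolate translational recurrence of the central patch as the place where repetitivity (in practice supplied by primitivity of the substitution) does the real work.
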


\begin{proof}
We begin by showing that elements of $\mathcal{U}_0$ are closed. Let $U_0(P,t)$ be in $\mathcal{U}_0$ and pick $\ep>0$ so small that $P-x(t) \subset B(0,\ep^{-1})$ and $\ep$ satisfies condition (v) of lemma \ref{euclidean on omega punc}. Suppose $T$ is a point of closure of $U_0(P,t)$ and $\{T_n\}$ is a sequence in $U_0(P,t)$ converging to $T$. For some $N$ we have that $d_\Gamma(T,T_n)<\ep$ for all $n \geq N$. By $(v)$ in lemma \ref{euclidean on omega punc} and finite local complexity it follows that $T\cap B(0,\ep^{-1})=T_n\cap B(0,\ep^{-1})$ for all $n \geq N$. Since $P-x(t) \subset B(0,\ep^{-1})$ it follows that $T$ is in $U_0(P,t)$. So $\Omega_{punc}^0$ is generated by closed and open sets which are invariant under translation via strong aperiodicity. Whence, $\Omega_{punc}^0$ is totally disconnected and has no isolated points. Furthermore, $\Omega_{punc}^0$ is compact and hence a Cantor set.
\end{proof}

The next step is to define translational equivalence on $\Omega_{punc}$. We define the equivalence relation $R_{punc} \subset \Omega_{punc} \times \Omega_{punc}$ as follows:
\[R_{punc} = \{(T,T^\prime) | T \in \Omega_{punc} \textrm{ and } T^\prime = T - x(t) \textrm{ for some } t \in T \}.\]
We want to define a suitable topology on $R_{punc}$ which encodes both the topology on $\Omega_{punc}$ and the translation. The natural topology, which denote $\toppunc$, on $R_{punc}$ is the metric topology defined as follows:
\[d_R((T,T-x(t)),(T^\prime,T^\prime-x(t^\prime))) = d_\Gamma(T,T^\prime) + |x(t) - x(t^\prime)|.\]

In analogy with the notation presented for $\Omega_{punc}$ we may view $R_{punc}$ as $R_{punc}^0 \times \s$ where
\[ R_{punc}^0 = \{(T_0,T_0-x(t)) | T_0 \in \Omega_{punc}^0 \textrm{ and } t \in T_0 \}.\] 

If $P$ is a finite patch in some $T$ in $\Omega_{punc}$ and $t,t^\prime$ are tiles in $P$ then we define the set $V(P,t,t^\prime)$ to be all elements $(T,T^\prime)$ in $R_{punc}$ such that $R_\theta(P-x(t)) \subset T$ for some $R_\theta$ in $\s$ and $T^\prime=T-R_\theta(x(t^\prime)-x(t))$. In symbols:
\[ V(P,t,t^\prime) = \{ (T,T-R_\theta(x(t^\prime)-x(t))) | R_\theta(P-x(t))\subset T \textrm{ for some } R_\theta \in \s \}. \]
Let $\mathcal{V}$ be the collection of all such sets for a given discrete hull $\Omega_{punc}$. Moreover, let $\mathcal{V}_0$ be the restriction of $\mathcal{V}$ to $R_{punc}^0$.

The following lemma is verified directly, and implies that condition (iii) of the \etale topology holds for $R_{punc}$.

\begin{lemma}\label{range and source on V euclidean}
The range and source maps $r,s : R_{punc} \rightarrow \Omega_{punc}$ are local homeomorphisms such that, for $V(P,t,t^\prime)$ in $\mathcal{V}$,
\[ r(V(P,t,t^\prime)) = U(P,t) \textrm{ and } s(V(P,t,t^\prime)) = U(P,t^\prime) \]
where $U(P,t)$ and $U(P,t^\prime)$ are in $\mathcal{U}$.
\end{lemma}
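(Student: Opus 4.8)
The plan is to verify the two image identities directly and then to read off the local homeomorphism property from them. First I would compute the range. By definition $r(V(P,t,t'))$ is the set of first coordinates of pairs in $V(P,t,t')$, which is exactly $\{T' \in \Omega_{punc} : R_\theta(P-x(t)) \subset T' \textrm{ for some } R_\theta \in \s\} = U(P,t)$. For the source, the computation rests on the linearity of the rotations: since $R_\theta(P-x(t)) + R_\theta(x(t)-x(t')) = R_\theta(P-x(t'))$, translating the patch $R_\theta(P-x(t)) \subset T$ by $-R_\theta(x(t')-x(t))$ shows $R_\theta(P-x(t')) \subset T - R_\theta(x(t')-x(t))$; hence every second coordinate lies in $U(P,t')$. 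Conversely, given $T'' \in U(P,t')$ with $R_\phi(P-x(t')) \subset T''$, the tiling $T := T'' + R_\phi(x(t')-x(t))$ lies in $\Omega_{punc}$ (it is a translate of an element of $\Omega_T$, and the same identity places a tile with puncture at the origin) and satisfies $(T,T'') \in V(P,t,t')$ with $s(T,T'') = T''$, giving $s(V(P,t,t')) = U(P,t')$.

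The part I expect to be the crux is injectivity of $r$ and $s$ on $V(P,t,t')$, equivalently the well-definedness of the second coordinate as a function of the first. This requires that the rotation $R_\theta$ with $R_\theta(P-x(t)) \subset T$ be uniquely determined by $T$. Here I would use that $P - x(t)$ contains the tile $t - x(t)$ whose puncture sits at the origin, so that $R_\theta(t-x(t))$ is a tile of $T$ with puncture at the origin and must therefore equal the unique such tile $T(0)$ (Definition \ref{discrete hull euclidean}). Because the puncture maximally breaks the symmetry of the proto-tile, the pointed tile has trivial symmetry group, so the rotation carrying $t-x(t)$ onto $T(0)$ is unique; this pins down $R_\theta$, and hence $s|_{V(P,t,t')}$ is a genuine single-valued bijection onto $U(P,t')$, and symmetrically for $r$. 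This is precisely the place where the symmetry-breaking choice of punctures and the infinite rotational freedom interact, so I would present it carefully rather than as ``obvious''.

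It then remains to upgrade these bijections to homeomorphisms and to check the local statement. Continuity of $r$ and $s$ is immediate from the formula for $d_R$, which dominates $d_\Gamma$ on each coordinate. For the inverses, the section $T \mapsto (T, T - R_{\theta(T)}(x(t')-x(t)))$ is continuous because, through the homeomorphism $\psi$ of Lemma \ref{psi homeo}, the orientation $\theta(T)$ differs by a constant from the $\s$-coordinate of $T$ and hence varies continuously, while translation by a continuously varying vector is continuous; thus $r|_{V(P,t,t')}$ and $s|_{V(P,t,t')}$ are homeomorphisms onto the open sets $U(P,t)$ and $U(P,t')$ (open since $\mathcal{U}$ generates the topology of $\Omega_{punc}$). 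Finally, the sets $V(P,t,t')$ cover $R_{punc}$: given $(T,T-x(t))$ I would take $P$ to be any patch of $T$ containing both $T(0)$ and $t$, so that, using $x(T(0))=0$, the pair $(T,T-x(t))$ lies in $V(P,T(0),t)$. Restricting $r$ and $s$ to these neighborhoods then exhibits them as local homeomorphisms on all of $R_{punc}$.
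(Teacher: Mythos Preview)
Your proposal is correct and is precisely the direct verification the paper alludes to; the paper itself gives no proof beyond the remark that the lemma ``is verified directly.'' Your argument in fact supplies the details the paper omits, including the one genuinely nontrivial point---uniqueness of the rotation $R_\theta$ via the symmetry-breaking property of the puncture---which is exactly what is needed and is handled correctly.
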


Since the range and source maps are homeomorphisms from $\mathcal{V}$ to $\mathcal{U}$ it follows that elements of $\mathcal{V}$ are compact and open via lemma \ref{cantor set times circle}.

\begin{proposition}\label{R punc is a local action euclidean}
The triple $(\Omega_{punc},R_{punc},\toppunc)$ is an \etale equivalence relation.
\end{proposition}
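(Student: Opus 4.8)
The plan is to verify the five defining conditions of an \etale topology for $\toppunc$ one at a time, taking the collection $\mathcal{V}$ as a basis. The first step is to record that $\mathcal{V}$ really is a basis for the metric topology $\toppunc$ on $R_{punc}$: under the identification $R_{punc} \cong R_{punc}^0 \times \s$ together with $\mathcal{V} \cong \mathcal{V}_0 \times \s$, this reduces to the statement that $\mathcal{V}_0$ generates the topology of $R_{punc}^0$, which is the purely translational situation treated by Kellendonk \cite{Kel1,PK}. With the $V(P,t,t^\prime)$ known to form a basis of compact open sets (as noted just before the statement), conditions (2) and (3) are immediate. For (2), given a point $(T,T)$ of the diagonal, choose any patch $P$ containing the tile $t=T(0)$; since $x(t^\prime)-x(t)=0$ when $t^\prime=t$, the basic set $V(P,t,t) = \{(T^\prime,T^\prime) \mid R_\theta(P-x(t)) \subset T^\prime \text{ for some } R_\theta \in \s\}$ is an open neighborhood of $(T,T)$ lying inside $\Delta$, so $\Delta$ is open. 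Condition (3) is precisely Lemma \ref{range and source on V euclidean}, as every point lies in some basic $V(P,t,t^\prime)$ on which $r$ and $s$ restrict to homeomorphisms onto $U(P,t)$ and $U(P,t^\prime)$.

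For $\sigma$-compactness (1) I would appeal to finite local complexity. For each integer $R>0$ there are only finitely many patches of diameter at most $R$ up to $\Gamma$, and two patches that are $\Gamma$-equivalent (with matched distinguished tiles) determine the same set $V(P,t,t^\prime)$, because the free rotation parameter $R_\theta$ absorbs the rotational part of the equivalence. Hence $\mathcal{V}$ has only countably many distinct members, each compact by Lemma \ref{cantor set times circle}. They cover $R_{punc}$, since an element $(T,T-x(s))$ lies in $V(P,T(0),s)$ for any patch $P$ of $T$ containing both $T(0)$ and $s$ (take $R_\theta=R_0$). Thus $R_{punc}$ is a countable union of compact sets.

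Condition (5) follows from the identity $V(P,t,t^\prime)^{-1} = V(P,t^\prime,t)$. Writing a typical element of $V(P,t,t^\prime)$ as $(T,\,T-R_\theta(x(t^\prime)-x(t)))$ with $R_\theta(P-x(t)) \subset T$ and translating the containment by $-R_\theta(x(t^\prime)-x(t))$ shows that $R_\theta(P-x(t^\prime))$ sits in the second coordinate, so that the reversed pair is exactly the corresponding element of $V(P,t^\prime,t)$. Since inversion commutes with unions, $U^{-1}$ is open for every open $U$.

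The genuinely new point, and the step I expect to be the main obstacle, is condition (4): openness of products. It suffices to treat two basic sets $V(P_1,t_1,t_1^\prime)$ and $V(P_2,t_2,t_2^\prime)$, and I would describe their product as a union of basic sets. A pair $(T,T^{\prime\prime})$ lies in the product exactly when the intermediate tiling $T^\prime = T-R_\theta(x(t_1^\prime)-x(t_1))$ contains both $R_\theta(P_1-x(t_1^\prime))$ and a rotate $R_\phi(P_2-x(t_2))$, the latter forcing $T^{\prime\prime} = T^\prime - R_\phi(x(t_2^\prime)-x(t_2))$. In Kellendonk's translational setting the composite is immediately another patch-set, but here I must check that the two rotations $R_\theta$ and $R_\phi$ interact consistently. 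The crucial observation is that once the two patches overlap within $T^\prime$ their relative orientation is pinned down, so by finite local complexity the union $R_\theta(P_1-x(t_1^\prime)) \cup R_\phi(P_2-x(t_2))$ assembles into a single larger patch $Q$ with distinguished tiles, on which the product is a corresponding set $V(Q,\cdot,\cdot)$; equivalently, the basic sets are bisections for $r$ and $s$, their product is again a bisection, and continuity of the composite map $T \mapsto T^{\prime\prime}$ over the open set $U(P_1,t_1^\prime) \cap U(P_2,t_2)$ yields an open image. Carefully verifying this compatibility of the rotation coordinates is the crux; granting it, conditions (1)--(5) all hold and the triple is an \etale equivalence relation.
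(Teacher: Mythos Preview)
Your argument rests on the claim that $\mathcal{V}$ is a basis for the metric topology $\toppunc$, and this is false. Each set $V(P,t,t^\prime)$ corresponds under $R_{punc}\cong R_{punc}^0\times\s$ to $V_0(P,t,t^\prime)\times\s$, i.e.\ it carries the \emph{full} circle in the rotation coordinate. Consequently no union of sets from $\mathcal{V}$ can isolate a small arc of rotations, and $\mathcal{V}$ does not generate $\toppunc$. Your reduction ``$\mathcal{V}$ is a basis for $R_{punc}$ $\Longleftrightarrow$ $\mathcal{V}_0$ is a basis for $R_{punc}^0$'' is therefore invalid: a family of the form $\{U\times\s\}$ never gives the product topology on $X\times\s$. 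This gap propagates to your treatment of (4) and (5), where you pass from basic sets to arbitrary open sets by writing the latter as unions of members of $\mathcal{V}$.

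The paper's proof avoids exactly this by working with the genuine basis $\{V_0\times E : V_0\in\mathcal{V}_0,\ E\subset\s\ \text{open}\}$. Since translation does not alter the rotation angle, any composable pair in $R_{punc}$ shares a common $\s$-coordinate, and one gets
\[
(V_0\times E)\cdot(V_0^\prime\times E^\prime)=(V_0\cdot V_0^\prime)\times(E\cap E^\prime),\qquad (V_0\times E)^{-1}=V_0^{-1}\times E,
\]
so conditions (4) and (5) reduce immediately to the purely translational case handled in \cite{Kel1,PK}. In particular, the ``rotation compatibility'' issue you flag as the crux simply evaporates: the two rotations $R_\theta$ and $R_\phi$ in a composable pair are automatically equal, and there is no need to analyze overlapping patches or invoke finite local complexity at that step. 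Your arguments for (1)--(3) are fine and essentially match the paper's; the fix is to replace $\mathcal{V}$ by $\mathcal{V}_0\times\{\text{open arcs}\}$ throughout the remaining two conditions.
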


\begin{proof} We show that $\toppunc$ is an \etale topology on $R_{punc}$. Indeed,
$R_{punc}$ is locally compact from lemma \ref{range and source on V euclidean} since $R_{punc}=\cup\mathcal{V}$. The diagonal of $R_{punc}$ may be written as:
\[ \Delta=\bigcup \{V(P,t,t) | P \textrm{ is a finite patch and } t \in P\}, \]
which is homeomorphic to $\Omega_{punc}$ and hence is compact and open. The fact that the range and source maps are local homeomorphisms is the content of lemma \ref{range and source on V euclidean}. The computation that the product and inverse of open sets in $R_{punc}$ is open is left to the reader, however, we note that both follow easily by noting that open sets in $R_{punc}$ may be written as unions of sets of the form $V_0 \times E$ where $V_0$ is in $\mathcal{V}_0$ and $E$ is an open set in $\s$.
\end{proof}

\section{A $C^\ast$-ALGEBRA ASSOCIATED WITH A TILING}\label{tiling algebra}

In the previous section we defined an equivalence relation with topology coming from the punctured hull and translations in $\rr$. In a sense we have discretized the action of $\rr$ on the continuous hull and we aim to define a groupoid $C^\ast$-algebra based on this structure using Renault's groupoid $C^\ast$-algebras \cite{Ren}. We note here that a construction using a crossed product $C(\Omega_T) \rtimes \rr$ is also possible in this case, see \cite{PK} for details. The key point is that the algebra created here is Morita equivalent to the crossed product and has a much more tractable form. In the final two sections we describe a natural subalgebra in the case of a substitution tiling and apply the construction to the Pinwheel Tiling.

The construction presented here is the standard construction of the reduced $C^\ast$-algebra from an \etale equivalence relation. The $C^\ast$-algebra comes with a generating set of functions consisting of partial isometries which have commutation relations very reminiscent of the irrational rotation algebra.

The reader is reminded of the assumptions presented at the beginning of section \ref{etale equiv section}. Furthermore, we shall denote equivalence classes of $R_{punc}$ by $[T] = \{T^\prime| (T,T^\prime) \in R_{punc}\}$ where $T$ is a representative punctured tiling from the class.

We begin by considering the continuous functions of compact support on $R_{punc}$, denoted $C_c(R_{punc})$, which is a complex linear space. A product and adjoint are defined on continuous compactly supported functions $f$ and $g$ via
\begin{eqnarray}
\nonumber
f\cdot g (T,T^\prime) & = & \sum_{T^{\prime\prime} \in [T]} f(T,T^{\prime\prime})g(T^{\prime\prime},T^\prime) \\
\nonumber
f^\ast(T,T^\prime) & = & \overline{f(T^\prime,T)}
\end{eqnarray}
which endows $C_c(R_{punc})$  with a $\ast$-algebra structure. A compactness argument paired with the fact that the range and source maps are local homeomorphisms implies that the sum in the product above is non-zero on only a finite number of terms. The inductive limit topology on $C_c(R_{punc})$, as a $\ast$-algebra, makes $C_c(R_{punc})$ into a topological $\ast$-algebra which reflects the groupoid structure. The inductive limit topology is described by: a sequence $\{f_n\}$ converges to $f$ in $C_c(R_{punc})$ if and only if there exists a compact subset $K$ in $R_{punc}$ such that the support of $f$ is contained in $K$, the support of $\{f_n\}$ is eventually in $K$, and $\{f_n\}$ converges uniformly to $f$ on $K$.

The construction of the reduced $C^\ast$-algebra of $R_{punc}$ is defined by representing $C_c(R_{punc})$, with the above topological $\ast$-algebra structure, as bounded operators on the Hilbert space $\ell^2([T])$ for some $T$ in $\Omega_{punc}$. In particular, for $T$ in $\Omega_{punc}$, $\xi$ in $\ell^2([T])$, and $T^\prime$ in $[T]$ the induced representation from the unit space  $\pi_{T}:C_c(R_{punc})\rightarrow \mathcal{B}(\ell^2([T]))$ is defined by
\[ (\pi_T(f)\xi)(T^\prime)= \sum_{T^{\prime\prime} \in [T]} f(T^\prime,T^{\prime\prime})\xi(T^{\prime\prime}). \]
It is verified in \cite{Ren} that the induced representations from the unit space are nondegenerate representations. The reduced $C^\ast$-algebra norm is defined, for $f$ in $C_c(R_{punc})$, via
\[ \|f\|_{red} = \sup \{\|\pi(f)\| | \pi \textrm{ is an induced representation}\}. \]
Renault \cite{Ren} goes on to show that this defines a $C^\ast$-norm, called the reduced norm.

\begin{definition}
The completion of $C_c(R_{punc})$ in the reduced $C^\ast$-norm is called the reduced $C^\ast$-algebra of $R_{punc}$ and is denoted $C^\ast(R_{punc})$.
\end{definition}

We have defined only the reduced $C^\ast$-algebra norm because the idea is to get our hands on a $C^\ast$-algebra rather than compare the full and reduced $C^\ast$-algebras. Furthermore, for a large class of tilings, namely tilings arising from a substitution, the \etale equivalence relation is an amenable groupoid making the comparison trivial.

The remaining goal of this section is to define a collection of functions in $C_c(R_{punc})$ whose closed linear span is $C^\ast(R_{punc})$. The collection will consist of partial isometries and allows for easy manipulation of the reduced $C^\ast$-algebra. Moreover, these partial isometries come equipped with a commutation relation similar to the commutation relation in the irrational rotation algebra.

The generating set for our dense subalgebra consists of combinations of partial isometries and a unitary. Viewing $\Omega_{punc}$ as the product of a Cantor set and a circle, the partial isometries determine, up to some $\ep > 0$, the location in the Cantor set with a translation and the unitary determines the location in the circle. Define for each $V(P,t,t^\prime)$ in $\mathcal{V}$ the characteristic function $e(P,t,t^\prime)=\mathcal{X}_{V(P,t,t^\prime)}$ and $z$ to be the function with support on the diagonal of $R_{punc}$ taking a tiling $(T,T)$ to the character of the angle of the tile sitting at the origin of $T$ with respect to the proto-tiles in fixed standard orientation. The explicit descriptions are as follows:
\begin{eqnarray}
\nonumber
e(P,t,t^\prime) (T,T^\prime) & = & \left\{
\begin{array}{ccc}
1&\textrm{if } R_\theta(P-x(t)) \subset T \textrm{ for some } R_\theta \in \s \textrm{ and } \\
 &T^\prime = T - R_\theta(x(t^\prime) - x(t)) \\
0& \textrm{otherwise}
\end{array} \right. \\
\nonumber
z(T,T^\prime) \quad & = & \left\{
\begin{array}{cc}
e^{\angle \, T(0) \, i} & \textrm{if } T = T^\prime \\
0& \textrm{otherwise} 
\end{array} \right. .
\end{eqnarray}
Let  $\mathcal{E}$ be the set of functions 
\[ \mathcal{E} = \{z^k \cdot e(P,t,t^\prime) | P \textrm{ is a patch in } T \textrm{ and } t,t^\prime \in P, k \in \mathbb{Z} \}. \]

\begin{lemma}\label{properties of E}
Let $P$ be a patch in a tiling $T$ in $\Omega_{punc}$ such that $t,t^\prime$ are tiles in $P$, both $e(P,t,t^\prime)$ and $z$ are in $C_c(R_{punc})$. Moreover, functions in the set $\mathcal{E}$ have the following properties:
\begin{enumerate}
	\item $ e(P,t,t^\prime)^\ast = e(P,t^\prime,t)$,
	\item $z \cdot e(P,t,t^\prime) = e^{(\angle \, t - \angle \, t^\prime) \, i} (e(P,t,t^\prime) \cdot z)$,
	\item $e(P_1,t_1,t_1^\prime) \cdot e(P_2,t_2,t_2^\prime) = e(P_1 \cup P_2 , t_1 , t_2^\prime)$ if $t_1^\prime = t_2$ and $P_1, P_2$ agree where they intersect, and
	\item $(z^k \cdot e(P_1,t_1,t_1^\prime)) \cdot (z^l \cdot e(P_2,t_2,t_2^\prime)) = e^{l(\angle \, t_2 - \angle \, t_1)\, i}(z^{k+l} \cdot e(P_1 \cup P_2,t_1,t_2^\prime))$ if $t_1^\prime = t_2$ and $P_1, P_2$ agree where they intersect.
\end{enumerate}
\end{lemma}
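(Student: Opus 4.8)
The plan is to verify the membership claims first and then check the four identities by direct computation, with essentially all of the content concentrated in a single uniqueness-of-rotation principle that forces the relevant convolution sums to collapse to one term. For membership, note that $e(P,t,t^\prime)$ is the characteristic function of $V(P,t,t^\prime)$, and by the remark following Lemma \ref{range and source on V euclidean} together with Lemma \ref{cantor set times circle} the set $V(P,t,t^\prime)$ is compact and open; hence $e(P,t,t^\prime)$ is continuous with compact support and so lies in $C_c(R_{punc})$. The function $z$ is supported on the diagonal $\Delta$, which is open by condition (ii) of the \etale topology and compact since it is homeomorphic to $\Omega_{punc}$ (Proposition \ref{R punc is a local action euclidean}); it therefore suffices to show that $T \mapsto e^{\angle\, T(0)\, i}$ is continuous on $\Omega_{punc}$. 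But this is the composite of the second coordinate $T \mapsto R_\theta$ of the homeomorphism $\psi$ of Lemma \ref{psi homeo} with the continuous identification $R_\theta \mapsto e^{\theta i}$ of $\s$ with the unit circle, so $z \in C_c(R_{punc})$.

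Next I would isolate the observation that drives the rest of the proof. If $R_\theta(P - x(t)) \subset T$ for a tiling $T$ in $\Omega_{punc}$, then the tile $R_\theta(t - x(t))$ of this patch has its puncture at the origin; since $T(0)$ is the unique such tile, we must have $R_\theta(t - x(t)) = T(0)$, and because punctures maximally break the symmetry of proto-tiles the action of $\s$ is free, so $R_\theta$ is \emph{uniquely determined} by $T$. Recalling that proto-tiles carry their puncture at the origin in standard orientation gives $t - x(t) = R_{\angle\, t}(p_i)$, whence $\angle\, T(0) = \theta + \angle\, t$; the same argument applied to the target tiling yields $\angle\, T^\prime(0) = \theta + \angle\, t^\prime$ whenever $(T,T^\prime) \in V(P,t,t^\prime)$.

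With these facts the easy identities are bookkeeping. Property (i) follows from $f^\ast(T,T^\prime) = \overline{f(T^\prime,T)}$ and the real-valuedness of characteristic functions, once one records the translation identity $R_\theta(P - x(t)) - R_\theta(x(t^\prime) - x(t)) = R_\theta(P - x(t^\prime))$, which shows $(T^\prime,T) \in V(P,t,t^\prime)$ is equivalent to $(T,T^\prime) \in V(P,t^\prime,t)$. For property (ii), convolution with $z$ (supported on $\Delta$) merely evaluates the phase of $z$ at $T$ on the left and at $T^\prime$ on the right, producing the scalars $e^{\angle\, T(0)\, i}$ and $e^{\angle\, T^\prime(0)\, i}$; the two angle identities of the previous paragraph then turn the ratio into $e^{(\angle\, t - \angle\, t^\prime)\, i}$. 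Property (iv) then follows formally, by iterating (ii) to $z^l \cdot e(P,t,t^\prime) = e^{l(\angle\, t - \angle\, t^\prime)\, i}\, e(P,t,t^\prime) \cdot z^l$ and combining with (iii).

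The main obstacle is property (iii). Here I would expand $e(P_1,t_1,t_1^\prime) \cdot e(P_2,t_2,t_2^\prime)(T,T^\prime)$ as a sum over intermediate tilings $T^{\prime\prime}$ and argue it has at most one nonzero term: membership $(T,T^{\prime\prime}) \in V(P_1,t_1,t_1^\prime)$ determines $R_\theta$, and hence $T^{\prime\prime} = T - R_\theta(x(t_1^\prime) - x(t_1))$, uniquely by the principle above. Since $t_1^\prime = t_2$, the tile of $T^{\prime\prime}$ at the origin forces the rotation $R_\phi$ placing $P_2$ to satisfy $R_\phi(t_2 - x(t_2)) = T^{\prime\prime}(0) = R_\theta(t_2 - x(t_2))$, so freeness gives $\phi = \theta$. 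Translating $R_\theta(P_2 - x(t_2)) \subset T^{\prime\prime}$ back into $T$ and using that $P_1$ and $P_2$ agree where they overlap then yields $R_\theta((P_1 \cup P_2) - x(t_1)) \subset T$, while the vector arithmetic reduces to $T^\prime = T - R_\theta(x(t_2^\prime) - x(t_1))$; this is exactly the condition $(T,T^\prime) \in V(P_1 \cup P_2, t_1, t_2^\prime)$, so the product is $e(P_1 \cup P_2, t_1, t_2^\prime)$. The delicate points are the two appeals to freeness of the $\s$-action, which pin down $\theta$ and force $\phi = \theta$, and keeping the rotated translation vectors straight throughout.
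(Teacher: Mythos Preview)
Your proof is correct and follows essentially the same approach as the paper's own proof: a direct computation of the convolution products, with the collapse of the sum in (iii) to a single term and the identification $R_\phi = R_\theta$ being the crux. Your presentation is in fact slightly more careful than the paper's, since you isolate and justify the uniqueness-of-rotation principle (via freeness of the $\s$-action) that the paper's computation of (iii) invokes implicitly, and you explain the continuity of $z$ through the homeomorphism $\psi$ rather than merely asserting membership in $C_c(R_{punc})$.
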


\begin{proof}
The facts that $e(P,t,t^\prime)$ and $z$ are in $C_c(R_{punc})$ follow from $V(P,t,t^\prime) \in \mathcal{V}$ being compact and open and the diagonal of $R_{punc}$ being compact and open respectively, see lemma \ref{cantor set times circle} and proposition \ref{R punc is a local action euclidean}. For the properties; part (i) is obvious, we prove parts (ii) and (iii), and part (iv) follows by combining (ii) and (iii).

For (ii) we take the product of the two functions in each order and produce the commutation relation. Indeed, on one hand
\[ z \cdot e(P,t,t^\prime)(T,T^\prime) = \sum_{T^{\prime\prime} \in [T]}z(T,T^{\prime\prime})e(P,t,t^\prime)(T^{\prime\prime},T^\prime) \]
\begin{eqnarray}
	\nonumber
	& = & z(T,T)e(P,t,t^\prime)(T,T^\prime) \\
	\nonumber
	& = & e^{(\angle \, T(0)) \, i} e(P,t,t^\prime)(T,T^\prime) \\
	\nonumber
	& = & \left\{
	\begin{array}{ccc}
	e^{(\angle \, t + \theta)\, i} & \textrm{if } R_\theta(P-x(t))\subset T \textrm{ and } T^\prime = T - R_\theta(x(t^\prime) - x(t)) \\ 
	& \textrm{ for some } R_\theta \in \s \\
	0& \textrm{otherwise} 
	\end{array} \right. \\
	\nonumber
	& = & e^{(\angle \, t + \theta)\, i} e(P,t,t^\prime)(T,T^\prime).
\end{eqnarray}
On the other hand
\[ e(P,t,t^\prime) \cdot z(T,T^\prime) = \sum_{T^{\prime\prime} \in [T]}e(P,t,t^\prime)(T,T^{\prime\prime})z(T^{\prime\prime},T^\prime) \]
\begin{eqnarray}
	\nonumber
	& = & e(P,t,t^\prime)(T,T^\prime)z(T^\prime,T^\prime) \\
	\nonumber
	& = & e^{(\angle \, T^\prime(0))\, i} e(P,t,t^\prime)(T,T^\prime) \\
	\nonumber
	& = & \left\{
	\begin{array}{ccc}
	e^{(\angle \, t^\prime + \theta)\, i} & \textrm{if } R_\theta(P-x(t))\subset T \textrm{ and } T^\prime = T - R_\theta(x(t^\prime) - x(t)) \\
	& \textrm{ for some } R_\theta \in \s \\
	0& \textrm{otherwise} 
	\end{array} \right. \\
	\nonumber
	& = & e^{(\angle \, t^\prime + \theta)\, i} e(P,t,t^\prime)(T,T^\prime).
\end{eqnarray}

For (iii) we have
\[ e(P_1,t_1,t_1^\prime) \cdot e(P_2,t_2,t_2^\prime) (T,T^\prime) = \sum_{T^{\prime\prime} \in [T]} e(P_1,t_1,t_1^\prime)(T,T^{\prime\prime}) e(P_2,t_2,t_2^\prime)(T^{\prime\prime},T^\prime) \]
\[ = \sum_{T^{\prime\prime} \in [T]} \left\{
\begin{array}{cccc}
1&\textrm{if } R_\theta(P_1-x(t_1))\subset T \textrm{ and }R_\phi(P_2-x(t_2))\subset T^{\prime\prime} \\
 &\textrm{ for some } R_\theta , R_\phi \in \s \textrm{ such that }\\ 
 & T^{\prime\prime} = T-R_\theta (x(t_1^\prime)-x(t_1)) \, , \, T^\prime=T^{\prime\prime}-R_\phi (x(t_2^\prime)-x(t_2)) \\
0& \textrm{otherwise}\\
\end{array} \right. \]
\begin{eqnarray}
\nonumber
 &=& \left\{
\begin{array}{ccc}
1&\textrm{if } R_\theta(P_1-x(t_1))\subset T \textrm{ and } R_\phi(P_2-x(t_2))\subset T - R_\theta(x(t_1^\prime) - x(t_1))  \\
 & \textrm{s.t. } T^\prime = T - R_\theta(x(t_1^\prime)-x(t_1))-R_\phi(x(t_2^\prime)-x(t_2)) \textrm{ for some } R_\theta , R_\phi \\
0& \textrm{otherwise}\\
\end{array} \right. \\
\nonumber
&=& \left\{
\begin{array}{ccc}
1&\textrm{if } R_\theta(P_1 \cup P_2-x(t_1))\subset T \textrm{ such that } R_\theta = R_\phi \in \s, \\ 
 & t_1^\prime=t_2, \textrm{ and } T^\prime = T - R_\theta(x(t_2^\prime)-x(t_1)) \\
0& \textrm{otherwise}\\
\end{array} \right.
\end{eqnarray}
$\quad= e(P_1 \cup P_2 , t_1 , t_2^\prime)(T,T^\prime) \textrm{ if } t_1^\prime = t_2$.
\end{proof}

Some remarks are in order. First we note that using the commutation relations in lemma \ref{properties of E} we obtain
\begin{eqnarray}
\nonumber
(z^k \cdot e(P,t,t^\prime)) \cdot (z^k \ast e(P,t,t^\prime))^\ast & = & e(P,t,t) \\
\nonumber
(z^k \cdot e(P,t,t^\prime))^\ast \cdot (z^k \cdot e(P,t,t^\prime)) & = & e(P,t^\prime,t^\prime)
\end{eqnarray}
and both $e(P,t,t)$ and $e(P,t^\prime,t^\prime)$ are projections in $C^\ast(R_{punc})$. Whence, each of the functions in $\mathcal{E}$ is a partial isometry. Suppose $\{p_1, \cdots, p_n\}$ are the proto-tiles of $\Omega_{punc}$, then the function $I = \sum_{i=1}^n e(\{p_i\},p_i,p_i)$ is the identity of the $C^\ast$-algebra $C^\ast(R_{punc})$. Finally, through the induced representation, the function $z^k \cdot e(P,t,t^\prime)$ acts on $\ell^2([T])$ as follows: if $R_\theta(P-x(t)) \subset T^\prime$, $T^\prime$ is in $[T]$, and $\xi$ is in $\ell^2([T])$, then
\[ (\pi_T(z^k \cdot e(P,t,t^\prime))\xi)(T^\prime) = e^{k(\angle \, t + \theta) \, i} \xi(T^\prime - R_\theta(x(t^\prime)-x(t))).  \]

\begin{proposition}
The complex linear span of the functions in $\mathcal{E}$ is a uniformly dense $\ast$-subalgebra of $C^\ast(R_{punc})$.
\end{proposition}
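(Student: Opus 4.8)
The plan is to establish two things: first, that the complex linear span of $\mathcal{E}$ is closed under multiplication and adjoints (hence is a $\ast$-subalgebra), and second, that it is uniformly dense in $C^\ast(R_{punc})$. The algebraic closure is essentially bookkeeping supplied by Lemma \ref{properties of E}: property (i) shows the span is closed under adjoints, since $(z^k \cdot e(P,t,t^\prime))^\ast = e(P,t,t^\prime)^\ast \cdot (z^k)^\ast = e(P,t^\prime,t) \cdot z^{-k}$, and one uses commutation relation (ii) to move the power of $z$ back to the left, reexpressing the result as a scalar multiple of an element of $\mathcal{E}$. Closure under products is exactly property (iv): the product of two generators is either zero (when $t_1^\prime \neq t_2$ or the patches disagree where they overlap) or a scalar multiple of a single generator $z^{k+l} \cdot e(P_1 \cup P_2, t_1, t_2^\prime)$. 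Thus $\mathrm{span}\,\mathcal{E}$ is a $\ast$-algebra; no completion-level argument is needed for this half.

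For density, the strategy is to approximate an arbitrary $f$ in $C_c(R_{punc})$ uniformly by elements of $\mathrm{span}\,\mathcal{E}$, since $C_c(R_{punc})$ is dense in $C^\ast(R_{punc})$ in the reduced norm and the reduced norm is dominated by the inductive-limit (uniform-on-compacts) topology. Using the product decomposition $R_{punc} = R_{punc}^0 \times \s$ together with Lemma \ref{cantor set times circle}, I would first reduce to approximating functions of the form $g \otimes h$, where $g$ is supported on $R_{punc}^0$ and $h$ is a continuous function on $\s$. The sets $V_0 \in \mathcal{V}_0$ are compact and open and form a basis (Lemma \ref{range and source on V euclidean} and Proposition \ref{R punc is a local action euclidean}), so the characteristic functions $e(P,t,t^\prime)$ restricted to the transversal give a locally constant approximation of $g$: a standard partition-of-unity argument on the Cantor-set factor writes any continuous $g$ as a uniform limit of finite linear combinations of such characteristic functions. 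On the circle factor, the function $z$ restricts to $e^{i\theta}$ on the fibre, so the $\{z^k\}_{k \in \mathbb{Z}}$ span a dense subalgebra of $C(\s)$ by Stone–Weierstrass (or the classical density of trigonometric polynomials), approximating $h$ uniformly. Taking products of these two approximations and invoking the identification $\mathcal{U} = \mathcal{U}_0 \times \s$ recovers a uniform approximation of $g \otimes h$ by elements of $\mathrm{span}\,\mathcal{E}$.

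The one subtlety I would be careful with is that $z^k$ multiplies the angle of the tile at the origin, so on a set $V(P,t,t^\prime)$ where the patch is rotated by $R_\theta$ the value $z^k$ contributes is $e^{k(\angle t + \theta)i}$, not simply $e^{ik\theta}$; this is the source of the scalar factors in Lemma \ref{properties of E}(iv). However, on a fixed element $V(P,t,t^\prime)$ the discrete part $\angle t$ is constant, so the only genuine variation across the fibre is the $e^{ik\theta}$ factor, and the Stone–Weierstrass step goes through after absorbing the constant phase. The main obstacle is therefore organizing the reduction cleanly: one must verify that an arbitrary $f \in C_c(R_{punc})$ can be covered by finitely many of the basic open sets $V(P,t,t^\prime) \times E$ on which it is suitably close to constant-times-a-trigonometric-polynomial, and that the resulting finite linear combination lies in $\mathrm{span}\,\mathcal{E}$ and converges uniformly. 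This is where compactness of the support of $f$, the compact-open nature of the $V$'s, and the product topology all have to be combined, and it is the step I expect to require the most care.
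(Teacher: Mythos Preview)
Your proposal is correct, and on the $\ast$-subalgebra half you are actually more careful than the paper, which leaves that entirely to Lemma~\ref{properties of E}. On the density half, however, the paper takes a shorter and genuinely different route: rather than passing through the product decomposition $R_{punc}^0 \times \mathcal{S}^1$ and approximating on each factor separately, it simply observes that the span of $\mathcal{E}$ is closed under \emph{pointwise} multiplication and complex conjugation, separates points of $R_{punc}$, and vanishes at no point (all immediate from the definitions and Lemma~\ref{properties of E}), and then invokes Stone--Weierstrass once, globally, on the locally compact space $R_{punc}$. Your approach makes the geometry transparent---one sees exactly how the Cantor factor is handled by characteristic functions of clopen sets and the circle factor by trigonometric polynomials, and the absorption of the constant phase $e^{ik\angle t}$ is a nice touch. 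The paper's approach is terser and avoids the bookkeeping of the tensor reduction, at the cost of asking the reader to check that $\mathcal{E}$ is a pointwise algebra (a different closure property from the convolution closure you verify). Both are sound; yours is more illuminating, the paper's more economical.
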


\begin{proof}
To begin we note that elements in the set $\mathcal{E}$ form an algebra under pointwise multiplication. The inductive limit topology ensures that convergent sequences in $C_c(R_{punc})$ are uniformly convergent. It follows from lemma \ref{properties of E} that $\mathcal{E}$ separates points of $R_{punc}$ and does not identically vanish on any point of $R_{punc}$. Whence, by the Stone - Weierstrass Theorem, $\mathcal{E}$ is uniformly dense in $C_c(R_{punc})$ and therefore also dense in $C^\ast(R_{punc})$.
\end{proof}

\section{SUBSTITUTION TILINGS AND $\mathcal{A}\mathbb{T}$-ALGEBRAS}\label{substitutions and AT}

In this section we begin by briefly giving some of the background required for substitution tilings. Given a substitution tiling with the group $\Gamma$ as the full group of orientation preserving isometries whose continuous hull is strongly aperiodic and has finite local complexity, we can form an $\AT$-algebra which is a $C^\ast$-subalgebra of $C^\ast(R_{punc})$. We remind the reader that $\AT$-algebras are inductive limits of matrix algebras whose entries are continuous functions on the circle. In the setting where $\Gamma = \rr$, Kellendonk and Putnam \cite{PK} show that an $AF$-algebra is produced, and the construction presented here extends their construction. An example of a tiling with the above properties is the Pinwheel Tiling, and in the next section we apply the construction presented here to the Pinwheel Tiling.

The reader again is reminded of the assumptions presented at the beginning of section \ref{etale equiv section}. Also recall the definition of the proto-tiles of a tiling, definition \ref{proto-tiles}, and in particular the relationship of the proto-tiles and the group $\Gamma$.

A substitution rule consists of a scaling factor $\lambda>1$ and a substitution $\omega$, such that for each $p_i$ in $\{ p_1 , p_2 , \cdots , p_n \}$, $\omega(p_i)$ is a finite collection of tiles $\{t_1, \cdots , t_l \}$ such that:
\[t_j = \gamma(p_j) \textrm{ for some } \gamma \in \Gamma , p_j \in \{ p_1 , p_2 , \cdots , p_n \}, \]
\[Int(t_j) \cap Int(t_k) = \emptyset , \]
\[\bigcup_{j=1}^l t_j = \lambda p_i. \]

Basically each proto-tile is being divided up into smaller versions of proto-tiles that have been moved by the group $\Gamma$ and then expanded by the scaling factor $\lambda$ so as to be the same size as the original proto-tiles. So for each proto-tile $p_i$ we apply $\omega$ and receive a patch of moved proto-tiles. Now extend the definition of $\omega$ to the image of a proto-tile under $\Gamma$. We write this tile as $t = \gamma(p_i)$ for some $\gamma$ in $\Gamma$, $p_i$ in $\{ p_1 , p_2 , \cdots , p_n \}$ and define $\omega(t)=\gamma(\omega(p_i))$. We may therefore define $\omega(T) = \{ \omega(t) | t \in T\}$ for a tiling $T$.

A substitution rule $\omega$ on a set of proto-tiles $\{p_1,\cdots,p_n \}$ is said to be {\em primitive} if, for some $k \geq 1$, an image under $\Gamma$ of each proto-tile $p_i$ appears inside the patch $\omega^k (p_j)$ for each pair $i,j$ in $\{ 1 , 2 , \cdots , n \}$.

Let us state some facts about substitutions and give references to where these facts are located in the literature. First, Kellendonk and Putnam \cite{PK} show that given a primitive substitution system one can produce a tiling $T$ by iterating the substitution. So the definition of $\Omega_T$ and subsequent definitions are not vacuous. Now if the continuous hull of a tiling has been created from a primitive substitution tiling system, then the substitution is a continuous map from $\Omega_T$ onto $\Omega_T$ \cite{AP}. Finally, Solomyak \cite{Sol} shows that the continuous hull $\Omega_T$ is strongly aperiodic if and only if $\omega$ restricted to $\Omega_T$ is injective.

We now begin the construction of an $\AT$-algebra. To begin, recall that $U(P,t)$ consists of all tilings, up to rotation, with the the tile $t$ at the origin and containing the patch $P$, see section \ref{etale equiv section}. In this section we will be interested in patches arising from the inflation of proto-tiles under the substitution. Indeed, for a proto-tile $p$ we define $U(p,p)$ to consist of all tilings with some rotation of the proto-tile $p$ having puncture on the origin. Notice that the definition of $U(p,p)$ should technically read $U(\{p\},p)$ but we will abuse this notation for the remainder of this note. To extend to inflations of proto-tiles we define, for fixed $N$ in $\mathbb{N}$,
\[ U_N(p,t) = \{\omega^N(T)-R_\theta(x(t))  |  R_\theta(p) \subset T \textrm{ for some } R_\theta \in \s, \, t \in \omega^N(p)\}.\]
At this point we remark that each $U_N(p,t)$ is an element of $\mathcal{U}$ for every $N$ in $\mathbb{N}$, $p$ a proto-tile, and $t$ in $\omega^N(p)$.

Let $\{p_1,p_2,\cdots,p_n\}$ be the set of proto-tiles for a given discrete hull $\Omega_{punc}$ with punctures on the origin. The sets $U(p_1,p_1), U(p_2,p_2), \cdots, U(p_n,p_n)$ are pairwise disjoint and 
\[ U(p_1,p_1) \cup U(p_2,p_2) \cup \cdots \cup U(p_n,p_n) = \Omega_{punc}. \]
Since $\omega$ is injective on $\Omega_{punc}$ whenever $\Omega_{punc}$ is strongly aperiodic, we have, for fixed $N$ in $\mathbb{N}$, sets of the form $U_N(p_i,t)$ are pairwise disjoint for each $p_i$ in $\{p_1,p_2,\cdots,p_n\}$ and $t$ in $\omega^N(p_i)$ and the disjoint union of such sets is $\Omega_{punc}$. This leads us to define, for each $N$ in $\mathbb{N}$, the collection
\[ \mathcal{U}_N = \{U_N(p_i,t) \in \mathcal{U} | p_i \in \{p_1,\cdots,p_n\} \textrm{ and } t \in \omega^N(p_i)\} \]
consisting of pairwise disjoint sets for each $N$ in $\mathbb{N}$ whose union is $\Omega_{punc}$. Following a similar procedure to that of section \ref{etale equiv section} we can extend the collection $\mathcal{U}_N$ to a subcollection $\mathcal{V}_N$ of $\mathcal{V}$. Fix $N$ in $\mathbb{N}$, for each pair of tiles $t$ and $t^\prime$ in the patch $\omega^N(p_i)$ we define $V_N(p_i,t,t^\prime)$ as the pairs of tilings $(\omega^N(T)-R_\theta(x(t)),\omega^N(T)-R_\theta(x(t^\prime)))$ such that $R_\theta(p_i) \subset T$ for some $R_\theta$ in $\s$. Let
\[ \mathcal{V}_N = \{V_N(p_i,t,t^\prime) | p_i \in \{p_1,\cdots,p_n\} \textrm{ and } t,t^\prime \in \omega^N(p_i)\},\]
and notice that $\mathcal{V}_N$ is contained in $\mathcal{V}$ defined in section \ref{etale equiv section} for each $N$ in $\mathbb{N}$.

Let $e_N(p_i,t,t^\prime)$ be the characteristic function of $V_N(p_i,t,t^\prime)$ and let $\mathcal{E}_N$ denote the set of functions
\[ \mathcal{E}_N=\{z^k \cdot e_N(p_i,t,t^\prime) | p_i \in \{p_1,\cdots,p_n\} \textrm{ and } t,t^\prime \in \omega^N(p_i)\}. \]
Observe that $\mathcal{E}_N$ is a proper subset of $\mathcal{E}$ presented in section \ref{tiling algebra}. We may therefore complete the span of the functions $\mathcal{E}_N$ in the reduced $C^\ast$-norm used to construct $C^\ast(R_{punc})$. Let
\[ \mathcal{A}_N = \overline{span}\{z^k \cdot e_N(p_i,t,t^\prime) | p_i \in \{p_1,\cdots,p_n\}, t,t^\prime \in \omega^N(p_i) \textrm{ and } k \in \mathbb{Z}\}. \]

We aim to show that $\mathcal{A}_N$ is a $C^\ast$-algebra which is isomorphic to a matrix algebra with entries that are continuous functions on the circle. We shall require a lemma whose proof is obtained by restricting lemma \ref{properties of E} to $\mathcal{E}_N$.

\begin{lemma}\label{relations}
Fix $N$ in $\mathbb{N}$, for $p_i,p_j \in \{p_1,\cdots,p_n\}$, $t_1,t_1^\prime \in \omega^N(p_i)$, $t_2,t_2^\prime \in \omega^N(p_j)$, and $k_1,k_2 \in \mathbb{N}$ we have the following relations 
\begin{enumerate}
	\item $[z^{k_1} \cdot e_N(p_i,t_1,t_1^\prime)] \cdot [z^{k_2} \cdot e_N(p_j,t_2,t_2^\prime)] = 0$ if $p_i \neq p_j$
	\item $[z^{k_1} \cdot e_N(p_i,t_1,t_1^\prime)] \cdot [z^{k_2} \cdot e_N(p_j,t_2,t_2^\prime)] = 0$ if $p_i = p_j$ and $t_1^\prime \neq t_2$
	\item $[z^{k_1} \cdot e_N(p_i,t_1,t_1^\prime)] \cdot [z^{k_2} \cdot e_N(p_j,t_2,t_2^\prime)] = e^{k_2(\angle \, t_1^\prime - \angle \, t_1)i}z^{k_1+k_2} \cdot e_N(p_i,t_1,t_2^\prime)$ if $p_i = p_j$ and $t_1^\prime = t_2$
\end{enumerate}
\end{lemma}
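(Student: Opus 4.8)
The plan is to recognize each $e_N(p_i,t,t^\prime)$ as a special instance of the characteristic functions $e(P,t,t^\prime)$ analyzed in Lemma \ref{properties of E}, so that the three relations fall out of part (iv) of that lemma together with the disjointness of the sets in $\mathcal{U}_N$. First I would verify the identification $V_N(p_i,t,t^\prime)=V(\omega^N(p_i),t,t^\prime)$, viewing the inflated patch $\omega^N(p_i)$ as the patch $P$. Indeed, if $R_\theta(p_i)\subset T$, then applying $\omega^N$ and translating by $-R_\theta(x(t))$ gives $R_\theta(\omega^N(p_i)-x(t))\subset \omega^N(T)-R_\theta(x(t))$, so the range tiling $\omega^N(T)-R_\theta(x(t))$ lies in $U(\omega^N(p_i),t)=U_N(p_i,t)$ and the pair has exactly the form occurring in $V(\omega^N(p_i),t,t^\prime)$. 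Consequently $e_N(p_i,t,t^\prime)=e(\omega^N(p_i),t,t^\prime)$ and each generator $z^{k}\cdot e_N(p_i,t,t^\prime)$ is a member of $\mathcal{E}$.

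Granting this identification, relation (iii) is immediate from Lemma \ref{properties of E}(iv): taking $P_1=P_2=\omega^N(p_i)$, the patches trivially agree where they intersect and $P_1\cup P_2=\omega^N(p_i)$, while the hypothesis $t_1^\prime=t_2$ lets one rewrite the phase $e^{k_2(\angle\,t_2-\angle\,t_1)i}$ as $e^{k_2(\angle\,t_1^\prime-\angle\,t_1)i}$, yielding precisely the stated formula with resulting function $e_N(p_i,t_1,t_2^\prime)$. For relations (i) and (ii) I would argue that the convolution vanishes by a source/range disjointness argument rather than by expanding the sum defining the product. By Lemma \ref{range and source on V euclidean} the source of $z^{k_1}\cdot e_N(p_i,t_1,t_1^\prime)$ is $U_N(p_i,t_1^\prime)$ and the range of $z^{k_2}\cdot e_N(p_j,t_2,t_2^\prime)$ is $U_N(p_j,t_2)$; a term in the convolution can be nonzero only at a tiling $T^{\prime\prime}$ lying in both sets. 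Since $\mathcal{U}_N$ consists of pairwise disjoint sets indexed by the pairs $(p_i,t)$ with $t\in\omega^N(p_i)$, the intersection $U_N(p_i,t_1^\prime)\cap U_N(p_j,t_2)$ is empty unless $p_i=p_j$ and $t_1^\prime=t_2$, forcing the product to be zero in cases (i) and (ii).

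The work here is not computational but organizational: everything reduces to the disjointness of the family $\mathcal{U}_N$, which is the sole place the substitution hypotheses enter, resting on the injectivity of $\omega$ on $\Omega_{punc}$ afforded by strong aperiodicity. The step I expect to require the most care is the identification $e_N(p_i,t,t^\prime)=e(\omega^N(p_i),t,t^\prime)$ of the first paragraph, since one must track the rotation $R_\theta$ and the basepoint $x(t)$ through the inflation $\omega^N$ to see that $V_N(p_i,t,t^\prime)$ is genuinely a member of $\mathcal{V}$; once this is established, all three relations are formal consequences of the already-proven Lemma \ref{properties of E}.
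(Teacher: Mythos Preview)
Your approach matches the paper's one-line proof exactly: the lemma ``is obtained by restricting lemma \ref{properties of E} to $\mathcal{E}_N$,'' and you have spelled out that restriction, invoking the pairwise disjointness of $\mathcal{U}_N$ for (i)--(ii) and part (iv) of Lemma \ref{properties of E} for (iii). The only point to watch is the identification you yourself flagged as delicate: your argument gives $V_N(p_i,t,t')\subseteq V(\omega^N(p_i),t,t')$, but the reverse inclusion can fail without a border-forcing hypothesis (a copy of the patch $\omega^N(p_i)$ inside a tiling need not sit as an actual $N$-th level supertile), so equality should not be claimed; fortunately the relations do not depend on this equality, since the computations in Lemma \ref{properties of E} apply verbatim to the characteristic function of any set of pairs $(T,T-R_\theta(x(t')-x(t)))$ with $\angle T(0)=\angle t+\theta$, which is precisely the structure of $V_N(p_i,t,t')$.
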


These relations imply that the closure of the span of $\mathcal{E}_N$ in the reduced norm is a $C^\ast$-algebra for each $N$ in $\mathbb{N}$. Moreover, from these relations one may deduce a great deal about the structure of the $C^\ast$-algebra $\mathcal{A}_N$. Our first observation is that for each $p_i$ in $\{p_1,\cdots,p_n\}$ there are a finite number of tiles in $\omega^N(p_i)$. Thus, for fixed $N$ and $p_i$ the closed span of $\{e_N(p_i,t,t^\prime) | t,t^\prime \in \omega^N(p_i)\}$ is finite dimensional. Moreover, relations (ii) and (iii) imply that if we fix both $N$ and $p_i$ and define
\[ \mathcal{A}_{(N,p_i)} = \overline{span}\{z^k \cdot e_N(p_i,t,t^\prime) | t,t^\prime \in \omega^N(p_i) \textrm{ and } k \in \mathbb{N} \} \]
then we obtain a homomorphism from $\mathcal{A}_{(N,p_i)}$ into the algebra of $m \times m$ matrices with entries in $C(\mathbb{T})$, where $m$ is the number of tiles in $\omega^N(p_i)$ and $\mathbb{T}$ denotes the circle $\{z \in \mathbb{C} | |z|=1\}$. This follows from the fact that the convolution product and adjoint on $\mathcal{E}_N$ mimic the matrix operations on $C(\mathbb{T}) \otimes \mathbb{M}_m$. Furthermore, the first relation implies that $\mathcal{A}_{(N,p_i)}$ and $\mathcal{A}_{(N,p_j)}$ are orthogonal when $i \neq j$. We may therefore define
\begin{eqnarray}
\nonumber
\mathcal{A}_N &=& \overline{span}\{z^k \cdot e_N(p_i,t,t^\prime) | p_i \in \{p_1,\cdots,p_n\}, t,t^\prime \in \omega^N(p_i), \textrm{ and } k \in \mathbb{N} \} \\
\nonumber
&=& \bigoplus_{i=1}^n \mathcal{A}_{(N,p_i)}.
\end{eqnarray}
The number of summands in the direct sum is the number of proto-tiles and is therefore independent of $N$. However, the sizes of the algebras will depend on the inflation of each proto-tile and will increase as $N$ increases. Our observations lead to the following result. The proof is quite standard and appears in \cite{Whi}.

\begin{proposition} \label{matrix isomorphism}
Fix $N$ in $\mathbb{N}$. Given the discrete hull of a tiling, $\Omega_{punc}$, there is a $C^\ast$-algebra isomorphism
\[ \psi_N: \mathcal{A}_N \rightarrow \bigoplus_{i=1}^n (C(\mathbb{T}) \otimes \mathbb{M}_{N,p_i}) \]
defined on the dense set $\mathcal{E}_N$ as follows. Let $x \in [0,1)$ and $E_{t,t^\prime}^{p_i}$ be a standard matrix unit in the row $t$ and column $t^\prime$ of $\mathbb{M}_{N,p_i}$. Then,
\begin{eqnarray}
\nonumber
\psi_N(z^k)(x) & = & \sum_{p_i, t \in \omega^N(p_i)}e^{(\angle \, t + 2\pi kx)i} \otimes E_{t,t}^{p_i}\\
\nonumber
\psi_N(e_N(p_i,t,t^\prime))(x) & = & 1 \otimes E_{t,t^\prime}^{p_i}.
\end{eqnarray}
\end{proposition}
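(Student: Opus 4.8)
The plan is to verify that the two formulas given on the generating set $\mathcal{E}_N$ extend to a well-defined $\ast$-homomorphism and then check it is a bijection on each summand. Since the excerpt has already decomposed $\mathcal{A}_N = \bigoplus_{i=1}^n \mathcal{A}_{(N,p_i)}$ into orthogonal pieces (via relation (i) of Lemma \ref{relations}), and since the target $\bigoplus_{i=1}^n (C(\mathbb{T}) \otimes \mathbb{M}_{N,p_i})$ has matching block structure, I would reduce to treating one proto-tile $p_i$ at a time: it suffices to show each $\psi_N$ restricted to $\mathcal{A}_{(N,p_i)}$ is a $\ast$-isomorphism onto $C(\mathbb{T}) \otimes \mathbb{M}_{N,p_i}$, and then take the direct sum.

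First I would define $\psi_N$ on $\mathcal{E}_N$ by the stated formulas and check it respects the algebraic relations of Lemma \ref{relations}. The key computation is multiplicativity on products of the generators $z^{k}\cdot e_N(p_i,t,t^\prime)$. On the target side, the standard matrix units satisfy $E^{p_i}_{t_1,t_1^\prime} E^{p_i}_{t_2,t_2^\prime} = \delta_{t_1^\prime,t_2} E^{p_i}_{t_1,t_2^\prime}$, which already matches relations (ii) and (iii) in the Kronecker-delta structure; the remaining content is to confirm the scalar factor $e^{k_2(\angle\,t_1^\prime - \angle\,t_1)i}$ produced by the convolution product agrees with what one gets by multiplying the functions $\psi_N(z^{k_1}\cdot e_N(p_i,t_1,t_1^\prime))$ and $\psi_N(z^{k_2}\cdot e_N(p_i,t_2,t_2^\prime))$ pointwise in $x$. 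Unwinding the definitions, $\psi_N(z^{k}\cdot e_N(p_i,t,t^\prime))(x) = e^{(\angle\,t^\prime + 2\pi kx)i}\otimes E^{p_i}_{t,t^\prime}$ (the $z^k$ contributes the character evaluated at the source tile $t^\prime$ sitting at the origin), and the pointwise product of two such expressions reproduces precisely the phase in relation (iii); the $\ast$-operation likewise matches $\overline{e^{(\angle\,t^\prime+2\pi kx)i}}\otimes E^{p_i}_{t^\prime,t}$, consistent with part (i) of Lemma \ref{properties of E}. This shows $\psi_N$ is a $\ast$-homomorphism on the dense span.

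Next I would argue that $\psi_N$ extends to the completion $\mathcal{A}_N$ and is isometric. Since $\mathcal{A}_N$ is a $C^\ast$-algebra (as noted after Lemma \ref{relations}) and $\psi_N$ is a $\ast$-homomorphism between $C^\ast$-algebras, it is automatically norm-decreasing and so extends continuously. To get injectivity I would show $\psi_N$ is injective on the dense $\ast$-subalgebra: the images $\{\,1\otimes E^{p_i}_{t,t^\prime}\,\}$ are linearly independent matrix units and the character $\psi_N(z)(x)$ has distinct diagonal entries $e^{(\angle\,t + 2\pi x)i}$, so no nonzero finite linear combination of generators maps to zero; an injective $\ast$-homomorphism of $C^\ast$-algebras is isometric, giving a well-defined isometric extension. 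For surjectivity I would invoke the Stone--Weierstrass Theorem exactly as in the proof of the density of $\mathcal{E}$: the functions $\psi_N(z^k)$ together with the matrix units $1\otimes E^{p_i}_{t,t^\prime}$ generate a $\ast$-subalgebra of $C(\mathbb{T})\otimes \mathbb{M}_{N,p_i}$ that separates points of $\mathbb{T}$ (the diagonal characters do this) and contains all matrix units, hence its closure is the whole fiber algebra; thus $\psi_N$ is onto.

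The main obstacle is bookkeeping the phase factors correctly: one must be careful about whether $z^k$ reads off the angle of the tile at the \emph{range} or the \emph{source} of a pair in $R_{punc}$, since this determines whether the character is evaluated at $\angle\,t$ or $\angle\,t^\prime$ in the formula for $\psi_N(z^k\cdot e_N(p_i,t,t^\prime))$. This is exactly the asymmetry recorded in part (ii) of Lemma \ref{properties of E}, and getting it consistent with the convention $\psi_N(z^k)(x)=\sum e^{(\angle\,t+2\pi kx)i}\otimes E^{p_i}_{t,t}$ is the delicate point; once the convention is pinned down, verifying that the scalar in relation (iii) of Lemma \ref{relations} emerges from the pointwise product is a routine but essential check. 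The topological arguments (continuity, isometry, Stone--Weierstrass surjectivity) are standard and I would only sketch them, deferring the full verification to \cite{Whi} as the excerpt indicates.
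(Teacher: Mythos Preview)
Your overall strategy---reduce to one summand via the block decomposition, verify the relations of Lemma~\ref{relations} on generators, extend by continuity, and finish with injectivity plus Stone--Weierstrass for surjectivity---is exactly the standard argument the paper has in mind (it omits the proof and cites \cite{Whi}). So at the level of approach you are aligned with the paper.

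There is, however, a concrete error in the step you yourself flag as delicate. You write
\[
\psi_N\bigl(z^{k}\cdot e_N(p_i,t,t^\prime)\bigr)(x) \;=\; e^{(\angle\,t^\prime + 2\pi kx)i}\otimes E^{p_i}_{t,t^\prime},
\]
claiming $z^k$ reads the angle of the \emph{source} tile $t^\prime$. This is backward. In the convolution product one has $(z\cdot e)(T,T^\prime)=z(T,T)\,e(T,T^\prime)$, so $z$ is evaluated at the \emph{range} $T$, whose origin tile is (a rotate of) $t$; compare the computation in the proof of Lemma~\ref{properties of E}(ii), where the left product picks up $e^{(\angle\,t+\theta)i}$. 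Equivalently, on the target side $\psi_N(z^k)$ is diagonal, and multiplying $\sum_s e^{k(\angle\,s+2\pi x)i}E^{p_i}_{s,s}$ on the left of $E^{p_i}_{t,t^\prime}$ selects $s=t$, giving the phase $e^{k(\angle\,t+2\pi x)i}$. With your choice of $\angle\,t^\prime$ the check of relation (iii) in Lemma~\ref{relations} fails: the two sides differ by $e^{k_2(\angle\,t_1-\angle\,t_1^\prime)i}$. With $\angle\,t$ (and the exponent read as $k(\angle\,t+2\pi x)$) the phases match exactly. Once this is corrected, the rest of your sketch goes through as written.
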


The next step is to show that these matrix algebras form an inductive system, for which the inductive limit is an $\mathcal{A}\mathbb{T}$-algebra.

\begin{proposition}\label{inclusion map}
For each $N$ in $\mathbb{N}$, $\mathcal{A}_N \subseteq \mathcal{A}_{N+1}$ and the inclusion map, denoted by $\phi_N$, is defined on the set $\mathcal{E}_N$ by \\
$\phi_N (z^k \cdot e_N(p_i,t^\prime,t^{\prime\prime}))$
\[=\sum_{(p_j,t,R_\theta) \in I(p_i)} z^k \cdot e_{N+1}(p_j,R_\theta(p_i-\lambda^Nx(t))+x(t^\prime),R_\theta(p_i-\lambda^Nx(t))+x(t^{\prime\prime}))\]
where $z^k \cdot e_N(p_i,t^\prime,t^{\prime\prime})$ is in $\mathcal{E}_N$ and $I(p_i)$ denotes the set of triples $(p_j,t,R_\theta)$, where $p_j$ is a proto-tile and $t$ is a tile in $\omega(p_j)$ such that $R_\theta(p_i + x(t)) \in \omega(p_j)$ for some $R_\theta$ in $\s$.
\end{proposition}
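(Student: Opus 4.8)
The plan is to prove the statement at the level of characteristic functions on $R_{punc}$ and then let the unitary factor distribute over the resulting finite sum. Since $\mathcal{A}_N$ and $\mathcal{A}_{N+1}$ are both concrete subalgebras of $C^\ast(R_{punc})$, the map $\phi_N$ is nothing but the restriction of the identity, so once the formula is verified it is automatically a $\ast$-homomorphism; the real content is that each generator $z^k \cdot e_N(p_i,t^\prime,t^{\prime\prime})$ of $\mathcal{A}_N$ actually lies in $\mathcal{A}_{N+1}$. Because the convolution product on $C_c(R_{punc})$ is bilinear, it suffices to establish the set identity
\[ V_N(p_i,t^\prime,t^{\prime\prime}) = \bigsqcup_{(p_j,t,R_\theta) \in I(p_i)} V_{N+1}(p_j, s^\prime, s^{\prime\prime}), \]
where $s^\prime,s^{\prime\prime}$ are the tiles of $\omega^{N+1}(p_j)$ into which $t^\prime,t^{\prime\prime} \in \omega^N(p_i)$ are carried by the canonical embedding of the supertile $\omega^N(p_i)$ into $\omega^{N+1}(p_j)$. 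Passing to characteristic functions then gives $e_N(p_i,t^\prime,t^{\prime\prime}) = \sum e_{N+1}(p_j,s^\prime,s^{\prime\prime})$, and multiplying on the left by $z^k$ distributes over the sum.

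The geometric heart of the argument is the unique decomposability of the substitution. First I would record the hierarchical identity
\[ \omega^{N+1}(p_j) = \omega^N(\omega(p_j)) = \bigcup_{t \in \omega(p_j)} \gamma_t\big(\omega^N(p)\big), \qquad t = \gamma_t(p), \]
in which $p$ denotes the proto-tile that the tile $t$ copies; thus the sub-supertiles of the form $\omega^N(p_i)$ occurring in $\omega^{N+1}(p_j)$ are exactly the $t \in \omega(p_j)$ that are copies of $p_i$, and running over all $p_j$ these are precisely the occurrences enumerated by $I(p_i)$. The recorded rotation $R_\theta$ is the one placing $p_i$ as the tile $t$ in $\omega(p_j)$, and the factor $\lambda^N$ appears because applying $\omega^N$ inflates the position $x(t)$ by $\lambda^N$; this identifies $s^\prime = R_\theta(p_i - \lambda^N x(t)) + x(t^\prime)$ and $s^{\prime\prime}$ likewise, matching the proposition's formula. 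To see that the union above is disjoint and exhaustive I would invoke recognizability: since $\Omega_T$ is strongly aperiodic, $\omega$ restricted to $\Omega_T$ is injective \cite{Sol}, and as $\omega$ is also a continuous surjection of the compact space $\Omega_T$ \cite{AP} it is a homeomorphism. Hence every tiling groups uniquely into $\omega^{N+1}$-supertiles, each $\omega^N$-supertile lies in exactly one $\omega^{N+1}$-supertile, and therefore the partition $\{U_{N+1}(p_j,s)\}$ refines $\{U_N(p_i,t^\prime)\}$; this disjointness is already reflected in relations (i) and (ii) of Lemma \ref{relations}.

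Finally I would treat the unitary factor. Regrouping the tiles of a fixed tiling into larger supertiles does not move the tile sitting at the origin, so the absolute angle $\angle T(0)$, and hence the value of $z$, is literally the same whether the tiling is viewed at level $N$ or level $N+1$; concretely, if the $\omega^N$-supertile is embedded by $R_\theta$ and the ambient tiling presents the $\omega^{N+1}$-supertile with rotation $\phi$, then the level-$N$ rotation is $\theta+\phi$, so $\angle s^\prime + \phi = (\angle t^\prime + \theta) + \phi$ and the phases $e^{ik(\angle t^\prime + \theta)}$ of Section \ref{tiling algebra} agree on the two sides. This is why $z^k$ carries through the formula unchanged, with no compensating phase. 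As a consistency check one can run the formula through the isomorphisms $\psi_N,\psi_{N+1}$ of Proposition \ref{matrix isomorphism} and verify that $E_{t^\prime,t^{\prime\prime}}^{p_i}$ is sent to the sum of the matrix units $E_{s^\prime,s^{\prime\prime}}^{p_j}$ over $I(p_i)$, the standard form of a unital embedding of direct sums of $C(\mathbb{T}) \otimes \mathbb{M}_m$ (the embedding being twisted by $R_\theta$ on the circle factor). The step I expect to be the main obstacle is the recognizability argument: pinning down that each occurrence of $\omega^N(p_i)$ embeds in a \emph{unique} $\omega^{N+1}(p_j)$, so that the union is genuinely disjoint and covers $V_N(p_i,t^\prime,t^{\prime\prime})$, requires the injectivity of $\omega$ together with finite local complexity, and a secondary nuisance is bookkeeping the exact coordinates of $s^\prime,s^{\prime\prime}$, where the inflation $\lambda^N$ and the embedding isometry $R_\theta$ do not commute.
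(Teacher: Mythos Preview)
Your proposal is correct and follows essentially the same route as the paper: establish a disjoint decomposition of the relevant groupoid sets using injectivity of $\omega$, pass to characteristic functions, and then carry the factor $z^k$ through unchanged. The only organizational difference is that the paper first proves the base case $U_0(p_i,p_i)=\bigcup_{(p_j,t,R_\theta)\in I(p_i)} U_1(p_j,t)$ at the level of range sets and then lifts to arbitrary $N$ by applying $\omega^N$, whereas you work directly with the $V_N$ sets at a general level; this is a matter of bookkeeping rather than a different argument.
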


\begin{proof}
We begin by showing that
\begin{eqnarray}
\label{41}
U_0(p_i,p_i) & = & \bigcup_{(p_j,t,R_\theta)\in I(p_i)} U_1(p_j,t)
\end{eqnarray}
where $U_0(p_i,p_i)$ is a clopen set in $\mathcal{U}_0$ and $U_1(p_j,t)$ is a clopen set in $\mathcal{U}_1$. Note that the sets in the union are pairwise disjoint since $\omega$ is injective. On one hand, if $T \in U_0(p_i,p_i)$ then $R_\theta(p_i) \subset T$ for some $R_\theta \in \s$. So $T(0)=R_\theta(p_i)$. Since substitution tilings are created with iterations of $\omega$, $T(0)$ is a tile in $R_\theta(\omega(p_j)-x(t))$ for some $(p_j,t,R_\theta) \in I(p_i)$. Thus, $T \in U_1(p_j,t)$ and left containment follows. On the other hand, if $T \in U_1(p_j,t)$ for some $(p_j,t,R_\theta) \in I(p_i)$ then $R_\theta(\omega(p_j)-x(t))\subset T$ and $T(0)=R_\theta(t-x(t))$. Since $t=R_\theta(p_i+x(t))$ we have $T \in U_0(p_i,p_i)$.

From \ref{41} it follows that
\[ e_0(p_i,p_i,p_i) = \sum_{(p_j,t,R_\theta) \in I(p_i)} e_1(p_j,t,t). \]
Applying $\omega^N$ to the equality of sets above we obtain for $t^\prime$ and $t^{\prime\prime}$ in $\omega^N(p_i)$, \\
$e_N(p_i,t^\prime,t^{\prime\prime}))$
\[=\sum_{(p_j,t,R_\theta) \in I(p_i)} e_{N+1}(p_j,R_\theta(p_i-\lambda^Nx(t))+x(t^\prime),R_\theta(p_i-\lambda^Nx(t))+x(t^{\prime\prime})).\]
Finally, we include the function $z^k$ and obtain, \\
$z^k \cdot e_N(p_i,t^\prime,t^{\prime\prime})$
\[=\sum_{(p_j,t,R_\theta) \in I(p_i)} z^k \cdot e_{N+1}(p_j,R_\theta(p_i-\lambda^Nx(t))+x(t^\prime),R_\theta(p_i-\lambda^Nx(t))+x(t^{\prime\prime})).\]
Therefore, the span of functions in $\mathcal{E}_N$ is contained in the span of functions in $\mathcal{E}_{N+1}$, and the equality of sets above shows the required map. This extends to $\mathcal{A}_N$ by taking the span and closing both sides of the equality in the reduced $C^\ast$-norm.
\end{proof}

Combining propositions \ref{matrix isomorphism} and \ref{inclusion map}, we have proved the following.

\begin{theorem}\label{AT-algebra}
The inductive limit
\[ \mathcal{A}\mathbb{T}_T = \overline{ \bigcup_{N=0}^\infty \mathcal{A}_N } \]
is a $\mathcal{A}\mathbb{T}$-algebra; that is, an `approximately finite dimensional circle algebra'.
\end{theorem}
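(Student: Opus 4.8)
The plan is to repackage Propositions \ref{matrix isomorphism} and \ref{inclusion map} as an inductive system in the category of $C^\ast$-algebras and appeal directly to the definition of an $\AT$-algebra. Recall that a \emph{circle algebra} is a finite direct sum of matrix algebras over $C(\mathbb{T})$, and that an $\AT$-algebra is by definition an inductive limit of circle algebras; so the theorem reduces to checking that each $\mathcal{A}_N$ is a circle algebra, that the $\phi_N$ organise them into an inductive system, and that the resulting limit is the closure of the union. First I would invoke Proposition \ref{matrix isomorphism}, whose isomorphism $\psi_N$ identifies $\mathcal{A}_N$ with $\bigoplus_{i=1}^n (C(\mathbb{T}) \otimes \mathbb{M}_{N,p_i})$. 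This is a circle algebra, with the fixed number $n$ of proto-tiles as the number of summands and matrix sizes $\mathbb{M}_{N,p_i}$ that grow with $N$ according to the inflation $\omega^N(p_i)$; thus every term of the system is an admissible building block.

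Next I would use Proposition \ref{inclusion map} to realise $\{\mathcal{A}_N\}_{N \geq 0}$ as a nested increasing chain of $C^\ast$-subalgebras of $C^\ast(R_{punc})$, with connecting maps the inclusions $\phi_N : \mathcal{A}_N \hookrightarrow \mathcal{A}_{N+1}$. Since each $\phi_N$ is a genuine inclusion of $C^\ast$-subalgebras it is automatically an injective $\ast$-homomorphism, so $(\mathcal{A}_N, \phi_N)_{N \geq 0}$ is an inductive system of circle algebras. Because the connecting maps are honest inclusions, the abstract inductive limit $\varinjlim(\mathcal{A}_N, \phi_N)$ coincides with the norm-closure $\overline{\bigcup_{N=0}^\infty \mathcal{A}_N}$ computed inside $C^\ast(R_{punc})$ — the standard identification of the inductive limit of an increasing sequence of $C^\ast$-subalgebras with the closure of their union. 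Hence $\AT_T$ is an inductive limit of circle algebras and is, by definition, an $\AT$-algebra.

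The argument is essentially bookkeeping once the two propositions are available, so no single step is a serious obstacle. The only point demanding care is compatibility: one must be sure that the explicit formula for $\phi_N$ on the generating set $\mathcal{E}_N$ corresponds, under the matrix-unit identifications supplied by $\psi_N$ and $\psi_{N+1}$, to an honest $\ast$-homomorphism of circle algebras and not merely to a linear embedding. This is already guaranteed by Proposition \ref{inclusion map}, where both sides of the defining equality are spanned and closed in the reduced $C^\ast$-norm, so that the inclusion respects the whole $C^\ast$-structure.
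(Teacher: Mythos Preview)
Your proposal is correct and mirrors the paper's own argument exactly: the paper simply states that the theorem follows by combining Propositions~\ref{matrix isomorphism} and~\ref{inclusion map}, and your write-up is a careful unpacking of precisely that one-line proof.
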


\section{PINWHEEL TILING}\label{Pinwheel Tiling}

The primary example of a tiling with infinite rotational symmetry is the Pinwheel Tiling. In this section we define the Pinwheel Tiling and consider the $\AT$-algebra contained in $C^\ast(R_{punc})$ created from the Pinwheel Tiling, which we denote $\AT_{pin}$ and $\mathcal{A}_{pin}$ respectively. 


The substitution for the pinwheel tiling begins with a $(1,2,\sqrt{5})$-right triangle and it's mirror image. We will denote these two tiles by $p_0$ and $p_1$. An important observation for the pinwheel tiling is that the smallest angle in these two triangles is equal to $\arctan(1/2)$, which is an irrational multiple of $2\pi$ \cite{Rad}. As usual we consider these two tiles to have puncture on the origin and to be in fixed orientation. The substitution $\omega$ for the Pinwheel Tiling is illustrated, for the proto-tile $p_0$, in figure \ref{fig:pinwheel rotation}. For the proto-tile $p_1$ we horizontally flip figure \ref{fig:pinwheel rotation}. A Pinwheel Tiling is created from the substitution by iterating the substitution, see \cite{PK} for a construction of a tiling from a substitution.

Notice that the tile in the center of the patch $\omega(p_0)$ is the tile $R_\theta(p_0)$ modulo translation where $\theta = \arctan(1/2)$. One may now observe that the tile in the center of the patch $\omega^2(p_0)$ is the tile $R_{2\theta}(p_0)$ modulo translation and the tile in the center of the patch $\omega^n(p_0)$ is the tile $R_{n\theta}(p_0)$ modulo translation (see figure \ref{fig:pinwheel rotation} for the first two iterations). Since $\theta$ is irrational and the irrational rotations are dense in the circle we will have tiles appearing in an infinite number of orientations. Moreover, the continuous hull for the pinwheel tiling has tiles appearing in all orientations of $\s$. This implies that the group $\Gamma$ consists of all orientation preserving isometries of $\rr$. It is well known that the Pinwheel Tiling has finite local complexity and the continuous hull is strongly aperiodic in this case \cite{Rad}, \cite{Whi}.

To construct $\AT_{pin}$ we begin by defining a labelling scheme. Observe that each of the patches $\omega^N(p_0)$ and $\omega^N(p_1)$ contains $5^N$ tiles. For $N$ in $\mathbb{N}$, $\mathcal{A}_N$ is now defined to be the closed span of the functions in $\{z^k \cdot e_N(p_i,t,t^\prime)\}$. With iteration of $\omega$ we may label each tile in the consecutive patches with a sequence of length $N$. For example $\omega^1(p_0)$ has tiles labeled $\{t_1,t_2,t_3,t_4,t_5\}$, $\omega^2(p_0)$ has tiles labeled $\{t_{11},\cdots,t_{15},t_{21},\cdots,t_{25},t_{31},\cdots,t_{35},t_{41},\cdots,t_{45},t_{51},\cdots,t_{55}\}$, and so on. In general, for $N$ in $\mathbb{N}$ a function in $\mathcal{E}_N$ will have the form $z^k \ast e_N(p_i,t_l,t_m)$ where $i=1$ or $2$, $k$ is in $\mathbb{N}$, and $l=l_1\cdots l_N$, $m=m_1\cdots m_N$ where $l_j$ and $m_j$ take values in $\{1,2,3,4,5\}$. We aim to use the labelling scheme to define the inclusion map from $\mathcal{A}_N$ into $\mathcal{A}_{N+1}$ and the isomorphism of $A_N$ onto $\mathbb{M}_{N,p_0}(C(\mathbb{T})) \oplus \mathbb{M}_{N,p_1}(C(\mathbb{T}))$ for functions in the generating set $\mathcal{E}_N$.

\begin{figure}
\begin{center}
\includegraphics[width=.95\textwidth]{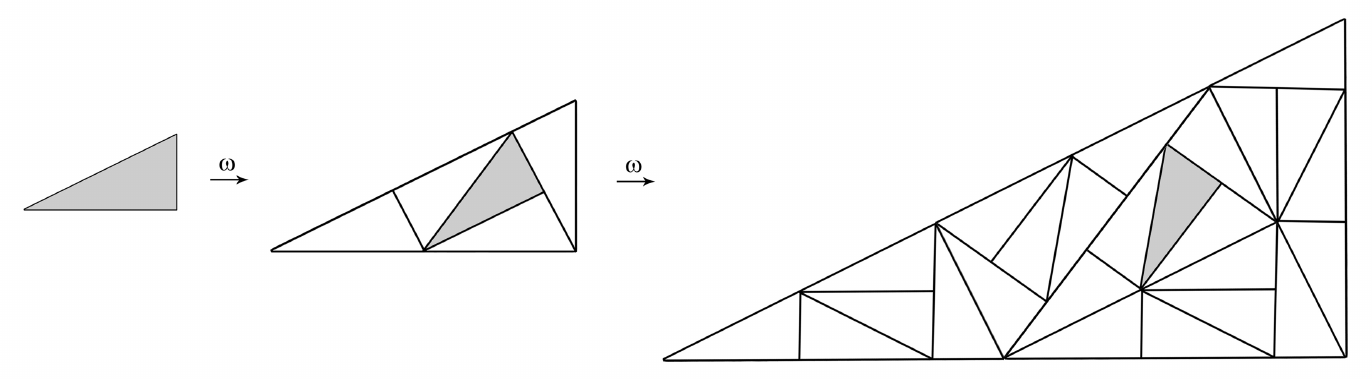}
\caption{The first two substitutions of $p_0$}
\label{fig:pinwheel rotation}
\end{center}
\end{figure}

Let us begin with the inclusion map $\mathcal{A}_N$ into $\mathcal{A}_{N+1}$:
Beginning with the proto-tile $p_i$, let $t_l$ and $t_m$ be tiles in $\omega^N(p_i)$. Then, where $i+1$ is taken $mod2$,
\[ z^k \cdot e_N(p_i,t_l,t_m)) \mapsto \sum_{j=3,4} z^k \cdot e_{N+1}(p_i,t_{j \, l},t_{j \, m}) + \sum_{j=1,2,5} z^k \cdot e_{N+1}(p_{i+1},t_{j \, l},t_{j \, m}). \]

To describe the isomorphism of $A_N$ onto $\mathbb{M}_{N,p_0}(C(\mathbb{T})) \oplus \mathbb{M}_{N,p_1}(C(\mathbb{T}))$ we begin with some notation. For labeled tiles $t_l,t_m$ in $\omega^N(p_i)$ such that $l=l_1 \cdots l_N$ and $m=m_1 \cdots m_N$ we define row $l$ and column $m$ as:
\begin{eqnarray}
\nonumber
\textrm{row } l & = & \textrm{ row } 1+\sum_{j=1}^N 5^{N-j}(l_j-1) \textrm{ in } M_{(N,p_i)}(C(\mathbb{T})) \\
\nonumber
\textrm{column } m & = & \textrm{ column } 1+\sum_{j=1}^N 5^{N-j}(m_j-1) \textrm{ in } M_{(N,p_i)}(C(\mathbb{T}))
\end{eqnarray}
Using this notation, the function $z^k \cdot e_N(p_i,t_l,t_m)$ goes to the direct sum of two matrices with zeros everywhere except in row $l$ and column $m$ of matrix $p_i$ where the entry is $e^{(\angle \, t_{l_1\cdots l_N} + 2k\pi x)i}$ which is a continuous function on the circle and $x \in [0,1)$.

Recall that for an inductive limit $C^\ast$-algebra the map $\phi_{N,M}$ denotes the inclusion of $\mathcal{A}_N$ into $\mathcal{A}_M$ for $M > N$ and the map $\phi_{N,\infty}$ denotes the inclusion of $\mathcal{A}_N$ into the inductive limit. Using a result of Dadarlat, Nagy, Nemethi, and Pasnicu \cite{DNNP} the $C^\ast$-algebra $\AT_{pin}$ is simple if we establish that for every $N$ in $\mathbb{N}$ and for every $f$ in $A_N$ such that $\phi_{N,\infty}(f)$ is non-zero there is an $M>N$ such that $\phi_{N,M}(f)(x) \neq 0$ for every $x$ in $[0,1)$.

\begin{theorem}
$\AT_{pin}$ is simple.
\end{theorem}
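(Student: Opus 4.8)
The plan is to verify the simplicity criterion quoted from \cite{DNNP}: for every $N$ in $\mathbb{N}$ and every nonzero $f$ in $\mathcal{A}_N$ with $\phi_{N,\infty}(f)$ nonzero, I must exhibit an $M>N$ such that $\phi_{N,M}(f)(x)\neq 0$ for all $x$ in $[0,1)$. Since $\mathcal{A}_N \cong (C(\mathbb{T})\otimes\mathbb{M}_{N,p_0})\oplus(C(\mathbb{T})\otimes\mathbb{M}_{N,p_1})$, it suffices by linearity and a standard approximation argument to understand how the inclusion map $\phi_N$ acts on each generator $z^k\cdot e_N(p_i,t_l,t_m)$, and more importantly how the composite $\phi_{N,M}$ distributes a single matrix unit across the larger matrix algebra $\mathcal{A}_M$.

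First I would unwind the explicit inclusion formula given just above the statement: the generator $z^k\cdot e_N(p_i,t_l,t_m)$ maps to a sum over the five children of $p_i$ (two copies of $p_i$, for $j=3,4$, and three copies of $p_{i+1}$, for $j=1,2,5$) of generators $z^k\cdot e_{N+1}(\,\cdot\,,t_{jl},t_{jm})$. Iterating this $M-N$ times, a single matrix unit in $\mathcal{A}_N$ spreads to a sum of $5^{M-N}$ matrix units in $\mathcal{A}_M$, one for each length-$(M-N)$ descendant word prepended to the labels $l$ and $m$. The crucial point is that primitivity of the Pinwheel substitution guarantees that after finitely many inflations, \emph{both} proto-tiles $p_0$ and $p_1$ appear among the descendants, so the image of any $p_i$-component meets both summands of $\mathcal{A}_M$; this is what forces the inductive limit to be simple rather than splitting.

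The heart of the argument is the angle content. Under the isomorphism $\psi_N$ of Proposition~\ref{matrix isomorphism}, the entry placed in the image matrix unit is $e^{(\angle\, t + 2\pi k x)i}$ where $t$ runs over the relevant descendant tiles. The key structural fact established in the introductory discussion of this section is that the central tile of $\omega^n(p_0)$ is $R_{n\theta}(p_0)$ with $\theta=\arctan(1/2)$ an irrational multiple of $2\pi$; hence as $M-N$ grows the angles $\angle\, t$ occurring among the descendants become dense in $[0,2\pi)$. I would therefore fix a nonzero $f\in\mathcal{A}_N$, look at a diagonal entry of $\phi_{N,M}(f)(x)$, and observe that it is a finite sum $\sum_t c\, e^{(\angle\,t + 2\pi k x)i}$ of terms whose phases $\angle\,t$ are spread across the circle; for $M$ large enough the spread of these angles prevents simultaneous cancellation for any value of $x$, so the sum is nonvanishing on all of $[0,1)$. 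Concretely I would argue that if some $x_0$ killed $\phi_{N,M}(f)(x_0)$ for all large $M$, the density of the $\angle\,t$ together with the fixed coefficients coming from $f$ would force $f$ itself to be zero, a contradiction with $\phi_{N,\infty}(f)\neq 0$ (injectivity of the connecting maps).

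The main obstacle I anticipate is making the ``angles become dense, hence no common zero'' step fully rigorous and uniform in $x$. It is not enough that the angles are dense in the limit; I need a quantitative statement that for a \emph{fixed} nonzero $f$ there exists a single $M$ working simultaneously for every $x\in[0,1)$ and every diagonal position. I would handle this by reducing to a compactness argument on the circle: the function $x\mapsto \phi_{N,M}(f)(x)$ is continuous, $[0,1)$ is compact after identification with $\mathbb{T}$, and the set of potential zeros is closed, so I would show the zero set shrinks to empty as $M\to\infty$ by an equidistribution estimate on the phases $\{n\theta \bmod 2\pi\}$ supplied by primitivity and the irrationality of $\theta$. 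Verifying that the coefficient data from $f$ interacts correctly with this phase spreading—rather than conspiring to cancel—is the delicate part, and is where I would spend most of the care; the combinatorics of the inclusion map and the bookkeeping of which descendant carries which angle is otherwise routine.
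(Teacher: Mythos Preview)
Your overall strategy---apply the \cite{DNNP} criterion and exploit the irrationality of $\theta=\arctan(1/2)$---matches the paper's. But the core of your proposal misreads the structure of the inclusion map, and the ``main obstacle'' you anticipate is a phantom.

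The key point you miss is that when $\phi_{N,M}$ sends a matrix entry $f_{p_i,l,m}$ into $\mathcal{A}_M$, the $5^{M-N}$ resulting terms land in \emph{pairwise distinct} matrix positions $(wl,wm)$ indexed by the descendant word $w$. There is no sum inside a single entry of $\phi_{N,M}(f)(x)$ and therefore no cancellation problem: each target entry is simply a rotated copy of the original function, $f_{p_i,l,m}$ evaluated on $R_{\alpha_w}(\,\cdot\,)$ for an angle $\alpha_w$ determined by $w$. Your proposed expression ``a diagonal entry is a finite sum $\sum_t c\, e^{(\angle t + 2\pi k x)i}$ over descendant tiles'' is not what the image looks like.

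Consequently the paper's argument is much simpler than what you sketch. Pick any one entry $f_{p_i,l,m}$ that is nonzero on some open arc $U\subset\mathbb{T}$. By choosing specific descendant words (e.g.\ $3^M$, $2^23^{M-2}$, \ldots, $2^M$) the paper exhibits copies of this entry, sitting in distinct positions of $\mathbb{M}_{(N+M,p_0)}(C(\mathbb{T}))$, that are nonzero on $U,\,R_{2\theta}(U),\,\ldots,\,R_{M\theta}(U)$ respectively; an analogous list lands in the $p_1$ summand. Since the even multiples of $\theta$ are dense in the circle, for large enough even $M$ these rotated arcs cover all of $[0,1)$. Thus at every $x$ \emph{some} entry of the matrix $\phi_{N,M}(f)(x)$ is nonzero, which is exactly what the criterion demands. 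No equidistribution estimate, no compactness argument on potential zero sets, and no worry about ``coefficients conspiring to cancel'' is needed.
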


\begin{proof}
Take $N$ in $\mathbb{N}$ and $f$ in $\AT_{pin}$ such that $\phi_{\infty,N}(f) \neq 0$. Now there is an open set $U$ about $x$ in $[0,1)$ so that $f$ restricted to some matrix entry in $\mathcal{A}_N$ is non-zero on all of $U$, i.e. for labels $l$ and $m$ and $p_i$ in $\{p_0,p_1\}$, $f_{p_i,l,m} \neq 0$ on all of $U$. Without loss of generality assume $i=0$ so that $f_{p_0,l,m}$ is in row $l$ and column $m$ in the matrix $\mathbb{M}_{(N,p_0)}(C(\mathbb{T}))$. Now applying $\phi_N$ to the function yields
\[ \phi_N(f_{p_0,l,m}) = \left\{
\begin{array}{lllll}
f_{p_1,1l,1m} \neq 0 & \textrm{ on } & R_{-\theta}(U) \\
f_{p_1,2l,2m} \neq 0 & \textrm{ on } & R_{-\theta}(U) \\
f_{p_0,3l,3m} \neq 0 & \textrm{ on } & R_{\theta}(U) \\
f_{p_0,4l,4m} \neq 0 & \textrm{ on } & R_{\theta+\pi}(U) \\
f_{p_1,5l,5m} \neq 0 & \textrm{ on } & R_{-(\theta+\pi/2)}(U)
\end{array} \right\} \in \bigoplus_{j=0}^1 \mathbb{M}_{(N+1,p_j)}(C(\mathbb{T})). \]
Now for $M \geq 2$ even, if we apply $\phi_{N+M,N}$ to the function $f_{p_0,l,m}$ we receive $5^M$ functions in distinct matrix entries in $\mathbb{M}_{(N+M,p_0)}(C(\mathbb{T}))\oplus \mathbb{M}_{(N+M,p_1)}(C(\mathbb{T}))$. In particular, we receive the functions
\[ \left.
\begin{array}{lllll}
f_{p_0,3^Ml,3^Mm} & \neq 0 & \textrm{ on } & R_{M\theta}(U) \\
f_{p_0,2^23^{M-2}l,2^23^{M-2}m} & \neq 0 & \textrm{ on } & R_{(M-2)\theta}(U) \\
\vdots & & & \vdots \\
f_{p_0,2^{M-2}3^2l,2^{M-2}3^2m} & \neq 0 & \textrm{ on } & R_{2\theta}(U) \\
f_{p_0,2^Ml,2^Mm} & \neq 0 & \textrm{ on } & U
\end{array} \right. \in \mathbb{M}_{(N+M,p_0)}(C(\mathbb{T})) \]
and the functions
\[ \left.
\begin{array}{lllll}
f_{p_1,3^{M-2}31l,3^{M-2}31m} & \neq 0 & \textrm{ on } & R_{(M-2)\theta}(U) \\
f_{p_1,2^23^{M-4}31l,2^23^{M-4}31m} & \neq 0 & \textrm{ on } & R_{(M-4)\theta}(U) \\
\vdots & & & \vdots \\
f_{p_1,2^{M-4}3^231l,2^{M-4}3^231m} & \neq 0 & \textrm{ on } & R_{2\theta}(U) \\
f_{p_1,2^{M-2}31l,2^{M-2}31m} & \neq 0 & \textrm{ on } & U
\end{array} \right. \in \mathbb{M}_{(N+M,p_1)}(C(\mathbb{T})) \]
using the notation that row $3^42^2l$ is the labeled row $333322l$. This shows that $\phi_{N,M}(f_{p_0,l,m})$ is nonzero on $U , R_{2\theta}(U) , \cdots , R_{(M-4)\theta}(U) , R_{(M-2)\theta}(U) $ in each of $\mathbb{M}_{(N+M,p_0)}(C(\mathbb{T}))$ and $\mathbb{M}_{(N+M,p_1)}(C(\mathbb{T}))$. Since irrational rotations of the circle are dense, we may find an even $M$ so that $\phi_{N,M}(f_{p_0,l,m})$ is nonzero on all of $[0,1)$.
\end{proof}

Finally, we comment on the $K$-theory of $\AT_{pin}$. Both of the abelian groups $K_0(\AT_{pin})$ and $K_1(\AT_{pin})$ are inductive limits of the system:
\[ \mathbb{Z}\oplus \mathbb{Z} \xrightarrow{\left[
\begin{array}{cc}
 2 & 3 \\
 3 & 2
\end{array} \right]} \mathbb{Z}\oplus \mathbb{Z} \xrightarrow{\left[
\begin{array}{cc}
 2 & 3 \\
 3 & 2
\end{array} \right]} \mathbb{Z}\oplus \mathbb{Z} \xrightarrow{\left[
\begin{array}{cc}
 2 & 3 \\
 3 & 2
\end{array} \right]} \cdots. \]
We note that the connecting matrix is not invertible over the integers. Calculations reveal that $K_0(\AT_{pin}) = K_1(\AT_{pin})$ sit in the following exact sequence which does not split:
\[ 0 \rightarrow \mathbb{Z} \rightarrow K_0(\AT_{pin}) \rightarrow \mathbb{Z}\left[ 1/5 \right] \rightarrow 0,\]
that is, $K_0(\AT_{pin})$ has $\mathbb{Z}$ as an ideal and $\mathbb{Z}\left[ 1/5 \right]$ as a quotient but is not the direct sum of the two groups. We refer the reader to exercise $7.5.2$ in \cite{LM} for a similar computation.

We conclude with two open questions regarding the Pinwheel Tiling. What is the $K$-theory of the $C^\ast$-algebra $\mathcal{A}_{pin}$ and, in particular, can the $K$-theory be computed in terms of cohomology in the sense of \cite{AP}? What is the exact relationship between $\AT_{pin}$ and $\mathcal{A}_{pin}$?

\begin{acknowledgements} 
Robin Deeley, Thierry Giordano, Ian Putnam, and Charles Starling must be thanked for conversations ranging from style to insightful remarks on results. In particular, great acclamation is due to Ian Putnam who supervised my work during my thesis, from which this note is based. This paper was written while visiting the Fields Institute in Mathematical Sciences during the Thematic Program in Operator Algebras.
\end{acknowledgements}

\end{document}